\titleformat{\section}[block]{\small\bfseries\filcenter}{\thesection}{1em}{}
\titleformat{\subsection}[block]{\small\bfseries\filcenter}{\thesubsection}{1em}{}
\newtheorem{thm}{Theorem}[section]
\newtheorem{prop}{Proposition}[section]
\newtheorem{?}{Question}[section]
\newtheorem{lem}{Lemma}[thm]
\newtheorem{claims}{Claim}[thm]
\newtheorem{rmk}{Remark}[section]
\newtheorem{ex}{Example}[section]
\newtheorem{definition}{Definition}[section]
\newtheorem{def-prop}{Definition-Proposition}[section]
\newcommand{\Z}{\mathbb{Z}}
\newcommand{\CP}{\mathbb{C}\mathbb{P}^}
\newcommand{\R}{\mathbb{R}}
\newcommand{\C}{\mathbb{C}}
\newcommand{\h}{\mathbb{H}}
\newcommand{\al}{\alpha}
\newcommand{\dt}{\delta}
\newcommand{\ep}{\epsilon}
\DeclareMathOperator{\Hess}{\text{Hess}}
\title{Examples of Toric Scalar-flat K\"ahler Surfaces with Mixed-type Ends}
\author{Yueqing Feng}
\date{}
\newcommand{\Addresses}{{% additional braces for segregating \footnotesize
 \bigskip
\footnotesize
Yueqing Feng, \textsc{Department of Mathematics, University of California, Berkeley, CA 94720, USA.}\par\nopagebreak
\textit{E-mail address: }\href{fyq@berkeley.edu}{\texttt{fyq@berkeley.edu}}}}
\begin{document}

\maketitle

\begin{abstract}
    Given a strictly unbounded toric symplectic $4$-manifold, we explicitly construct complete toric scalar-flat K\"ahler metrics on the complement of a toric divisor. These symplectic $4$-manifolds correspond to a specific class of non-compact K\"ahler surfaces. We also provide an alternative construction of toric scalar-flat K\"ahler metrics with conical singularity along the toric divisor, following the approach of Abreu and Sena-Dias.
\end{abstract}
\begingroup
\raggedright 

\section{Introduction}

Let $X$ be a non-compact toric symplectic $4$-manifold. In \cite{AS12}, Abreu and Sena-Dias construct complete toric scalar-flat K\"ahler metrics on strictly unbounded toric symplectic $4$-manifold $X$. We say $X$ is strictly unbounded if the moment polytope of $X$ is unbounded with the unbounded edges being non-parallel. This condition is equivalent to saying there exists a finite sequence of blow-downs of $X$ from which we obtain a minimal resolution of $\C^2\slash\Gamma$ for some finite cyclic subgroup $\Gamma\subset U(2)$. The metrics constructed in \cite{AS12} include the well-known examples of the LeBrun-Simanca metrics \cite{lebrun1991explicit}, the (multi-)Taub-NUT metrics \cite{lebrun1991complete}, the gravitational instantons of Gibbons-Hawking \cite{gibbons1978gravitational} and Kronheimer \cite{kronheimer1993construction}, etc. 

\medskip

In this article, using \textit{Donaldson's ansatz} \cite{Don09} of toric scalar-flat K\"ahelr metrics, we explicitly construct complete toric scalar-flat K\"ahler metrics on the complement of a torus-invariant divisor in $X$, which exhibit \textit{Poincar\'e type} singularity along the divisor. The study of Poincar\'e type K\"ahler metrics stems from the standard Poincar\'e cusp metric

$$\omega_{\Delta^*}=\displaystyle\frac{\displaystyle\sqrt{-1}dz\wedge d\bar{z}}{(|z|\log|z|)^2}=4\displaystyle\sqrt{-1}\partial\bar{\partial}\log(-\log|z|^2)$$

on the punctured unit disk. In general, for a smooth divisor $D$ in $X$, we study complete K\"ahler metrics of Poincar\'e type(see Definition~\ref{definition PT}). Geometrically, near every point on the divisor, the Poincar\'e type metric is asymptotic to the model product metric given by the Poincar\'e cusp metric on the punctured disc and a smooth metric on the divisor. Known constructions of canonical K\"ahler metrics of Poincar\'e type include the negative K\"ahler-Einstein ones studied in \cite{CY80},  \cite{Kob84}, \cite{TY87}; the toric ones studied in \cite{Abr01} and \cite{Bry01}; and the constant scalar curvature K\"ahler and extremal K\"ahler ones studied in \cite{Sek18} and \cite{Fen24}. Most of these metrics are \textit{not} explicit, while in this article the constructions of toric scalar-flat K\"ahler metrics of Poincar\'e type are explicit.

\medskip

First, we focus on the case where $X$ is strictly unbounded, or equivalently, as a complex surface, it arises as a finite sequence of blow-ups of minimal resolution of $\C^2\slash\Gamma$, as described in Definition-Proposition~\ref{definition strictly unbounded}:

\begin{thm}\label{main theorem}(Theorem~\ref{Theorem unbounded PTK SFK}+Theorem~\ref{theorem asymptotic behavior})
   Given $X$ a strictly unbounded toric symplectic $4$-manifold and $D=\displaystyle\sum_{i=1}^mD_i$ a divisor on $X$ such that each $D_i$ is an irreducible prime divisor fixed by the torus action and $D_i\cap D_j=\emptyset$ for any $1\le i\ne j\le m$. On $X\backslash D$, we have

   \begin{itemize}
       \item  a toric scalar-flat K\"ahler metric with Poincar\'e type singularity along $D$, and is asymptotically locally Euclidean(ALE) on the remaining end;
       \item  a two-parameter family of toric scalar-flat K\"ahler metrics with Poincar\'e type singularity along $D$, and are asymptotic to either the generalized Taub-NUT metrics or the exceptional Taub-NUT metrics on the remaining end.
   \end{itemize}
\end{thm}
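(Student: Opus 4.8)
\emph{Proof strategy.} The engine of the construction is Donaldson's ansatz, which recasts the Abreu scalar-flat equation for toric $4$-manifolds as a \emph{linear} one: in suitable coordinates the inverse Hessian $H^{ij}$ of the symplectic potential is reconstructed from a solution of a linear, Laplace-type equation on a half-plane — in the prototype hyperk\"ahler case an axially symmetric harmonic function on $\R^3$ — and the combinatorics of the moment polytope $P$ enter only through the boundary data: each edge of $P$ becomes a segment on the boundary line carrying a Guillemin-type condition that caps the metric off smoothly across the corresponding toric divisor. Abreu--Sena-Dias exploited exactly this to produce, for $X$ strictly unbounded, a complete toric scalar-flat K\"ahler metric with all edges of $P$ satisfying the Guillemin condition. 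So the first step is to recall this set-up and record precisely which admissible boundary behaviours correspond to which local geometry.

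The second step is to identify the \emph{Poincar\'e boundary datum}. In the rank-one transverse model the cusp metric $\omega_{\Delta^*}=\sqrt{-1}\,dz\wedge d\bar z/(|z|\log|z|)^2$ has symplectic potential with a $-\log$ blow-up along the edge, in contrast with Guillemin's bounded $\tfrac12\ell\log\ell$ term; this blow-up translates into a definite logarithmic behaviour of the Donaldson datum along the relevant boundary segment. Given $D=\sum_{i=1}^m D_i$, I would keep the moment polytope $P$ of $X$ and, on each edge $e_i$ corresponding to $D_i$, replace the Guillemin datum of the Abreu--Sena-Dias solution by this Poincar\'e datum, leaving the remaining edges untouched; by linearity of Donaldson's equation the new datum is the Abreu--Sena-Dias one plus a harmonic correction whose singular boundary behaviour sits exactly along the $e_i$. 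Here the hypothesis $D_i\cap D_j=\emptyset$ is used essentially: it forces the $e_i$ to be pairwise non-adjacent, so every vertex of $P$ still meets two Guillemin edges (or lies at an unbounded end), and one never has to interpret a vertex at which two cusp directions collide.

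The third step is to check that the resulting object is what we claim. \textbf{(a) K\"ahlerness:} one must show that the modified symplectic potential is still strictly convex with positive-definite Hessian throughout $\mathrm{int}\,P$, so that Donaldson's ansatz really outputs a genuine K\"ahler metric on $X\backslash D$; this is the heart of the matter, since one has to control both the perturbation of the Abreu--Sena-Dias metric away from the $e_i$ and the interaction of the new $-\log$ blow-up with the rest of the potential near the $e_i$. \textbf{(b) Scalar-flat:} automatic, by construction from the ansatz. \textbf{(c) Poincar\'e type along $D$:} expand the metric transversally to each $e_i$ and compare with Definition~\ref{definition PT} — transversally it should be asymptotic to the cusp metric on the punctured disc, tangentially to a smooth metric on $D_i$, with error decaying at the rate demanded there. \textbf{(d) The end at infinity and completeness:} since the correction decays on the unbounded part of $P$, the asymptotics there is governed by the unbounded edges precisely as in Abreu--Sena-Dias, yielding the ALE model (the relevant minimal resolution of $\C^2\slash\Gamma$) in the base case; completeness then follows because both cusp ends and ALE ends are complete. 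Finally, for the two-parameter family one adds to the Donaldson datum the two homogeneous solutions of the linearised equation of slowest admissible growth — those that leave $P$ and the behaviour along the $e_i$ unchanged but modify the volume growth at the end at infinity — and identifies the resulting asymptotics with the generalized, resp.\ exceptional, Taub-NUT metrics.

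The genuine obstacles, I expect, are step (a) and the sharp part of step (c): proving global positive-definiteness of the Hessian once an edge has been ``opened up'' into a cusp, and extracting from Donaldson's ansatz an asymptotic expansion near such an edge precise enough to certify the Poincar\'e-type condition of Definition~\ref{definition PT}. The remaining ingredients — the combinatorial description of $P$ for strictly unbounded $X$ through iterated blow-ups, the superposition of boundary data, the unchanged analysis at the end at infinity, and the bookkeeping of the two Taub-NUT parameters — should be routine given the Abreu--Sena-Dias machinery together with Donaldson's ansatz.
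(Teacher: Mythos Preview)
Your overall architecture matches the paper: Donaldson's ansatz, start from the Abreu--Sena-Dias data $\xi_{AS}$, superpose a harmonic correction for each cusp edge, then verify $\det D\xi>0$, the boundary behaviour, and the asymptotics. Your reading of the two-parameter family (adding the linear solutions $\alpha H,\beta H$ to $\xi_1,\xi_2$) and of the end at infinity (the correction contributes only $O(\rho^{-2})$ to $D\xi$, so the ALE/Taub-NUT analysis of \cite{AS12} and \cite{weber2022asymptotic} goes through unchanged) is correct. The gap is in the form of the Poincar\'e correction itself.

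You describe it as ``a definite logarithmic behaviour of the Donaldson datum along the relevant boundary segment'' and speak of replacing the Guillemin condition \emph{along} $e_i$. That is the wrong local picture. In the $(H,r)$ half-plane, the interval $(-a_k,-a_{k-1})$ on $\{r=0\}$ carrying the Guillemin edge $e_k$ is \emph{collapsed to a single point} by setting $a_k=a_{k-1}$, and at that point one inserts not a logarithm but the $3$-dimensional Green's function
\[
\xi\ \longmapsto\ \xi\ -\ \frac{\Lambda_k}{2}\,\frac{\nu_k}{\sqrt{(H+a_{k-1})^2+r^2}},
\]
with $\Lambda_k$ fixed by the length of $e_k$ in $P$. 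The entire edge $\ell_k\subset\partial P$ is then the image, under the moment coordinates $(x_1,x_2)$, of that one boundary point; the $-\tfrac{\Lambda_k}{2}\log\ell_k$ term in $u$ comes from $\xi\sim-\tfrac{\Lambda_k}{2\rho_{k-1}}\nu_k$ together with $\ell_k(x)=\rho_{k-1}+O(r^2)$. This specific mechanism is what makes your two ``genuine obstacles'' tractable: the positivity $\det D\xi>0$ is a term-by-term sign check (Lemma~\ref{main theorem}.1 in the paper) that uses the explicit $H_{k-1}/\rho_{k-1}^3$ and $r/\rho_{k-1}^3$ entries produced by differentiating the pole, together with $\det(\nu_i,\nu_{i+1})=-1$; and the Poincar\'e-type verification is the computation that $r^2=(\rho_{k-1}+H_{k-1})(\rho_{k-1}-H_{k-1})$ factors as $\ell_{k-1}\ell_k^{\,2}\ell_{k+1}$ times a smooth positive function near the collapsed point, placing $u$ in the class $S_{\Lambda_k/2,\,\frac12\det(\nu_{k-1},\nu_{k+1})}$. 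A segment-supported logarithmic modification of $\xi$ would not reproduce the $S_{\alpha,\beta}$ behaviour of $u$, and neither check would go through as written.
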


Along the proof, we will see the complex structures induced from these metrics are biholomorphic to that on $X$ away from the divisor $D$. Here the \textit{generalized Taub-NUT metrics} live on $\C^2$ and are scalar-flat K\"ahler generalizations of the Ricci-flat K\"ahler Taub-NUT metrics. These metrics are introduced by Donaldson in \cite{donaldson2009constant} and later explored by Abreu and Sena-Dias in \cite{AS12} and Weber in \cite{weber2022asymptotic} and \cite{weber2022generalized}. They all have quadratic curvature decay and cubic volume growth, but except for the standard Taub-NUT metric, they are not asymptotically locally flat(ALF). The \textit{exceptional Taub-NUT metrics} are also scalar-flat K\"ahler metrics living on $\C^2$. These metrics are studied by Weber in \cite{weber2022generalized}, where he showed these metrics have quadratic curvature decay and quartic volume growth but are not ALE.

\medskip

A previously known example of toric scalar-flat K\"ahler metric of Poincar\'e type is first discussed by Fu-Yau-Zhou in \cite{FYZ16}, and later studied by the author in \cite{Fen24}, which we refer to as the \textit{Hwang-Singer metric} $\omega_{HS}$. It lives on $\C^2-\{0\}$. Near the origin, it has Poincar\'e type singularity, and it is asymptotically Euclidean on the other end. This $S^1$-invariant metric is in fact toric and we will discuss it in detail in Example~\ref{HS}. Theorem~\ref{main theorem} then gives us a two-parameter family deformation of $\omega_{HS}$. 

\medskip

Intuitively, Poincar\'e type metrics can be viewed as the limit of a
 conical family of metrics when the cone angle approaches $0$. Similarly, smooth metrics can be viewed as the limit when the cone angle approaches $2\pi$. This was proved by Guenancia \cite{Gue15} in the K\"ahler-Einstein setting. For the metrics constructed in Theorem~\ref{main theorem}, we can explicitly write down a conical family of toric K\"ahler metrics connecting the Poincar\'e type metrics with those in \cite{AS12}. This family of conical metrics is \textit{not} necessarily scalar-flat, though. 
 
 \medskip
 
 On the other hand, Weber \cite{weber2023analytic} gave a construction of toric scalar-flat K\"ahler metrics with conical singularity along the divisor. The cone angle along a given edge is closely related to the notion of "label" introduced there. In \cite{weber2023analytic}, the label is interpreted as a characterization of the growth speed of the Killing field vanishing along the edge. We formulate the problem from a different perspective, emphasizing the various \textit{boundary conditions} specified by the cone angles. More precisely, following the method of Abreu and Sena-Dias, we give an independent construction of the conical toric scalar-flat K\"ahler metrics:

\begin{thm}(Theorem~\ref{Theorem unbounded conical SFK})
   Consider the same setting as in Theorem~\ref{main theorem}. Fix $\theta_i\in(0, 1)$ for $i=1, \cdots, m$, on $X$, we have

   \begin{itemize}
       \item  a conical toric scalar-flat K\"ahler metric with angle $2\pi\theta_i$ along $D_i$ and is asymptotically locally Euclidean(ALE) on the remaining end;
       \item  a two-parameter family of them  with angle $2\pi\theta_i$ along $D_i$ and are asymptotic to either the generalized Taub-NUT metrics or the exceptional Taub-NUT metrics on the remaining end.
   \end{itemize}

\end{thm}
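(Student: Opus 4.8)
The plan is to re-run the Abreu--Sena-Dias construction for a strictly unbounded toric symplectic $4$-manifold via Donaldson's ansatz \cite{Don09}, this time imposing a \emph{conical} boundary condition of angle $2\pi\theta_i$ along each $D_i$ in place of either the smooth condition of \cite{AS12} or the cusp condition used in the proof of Theorem~\ref{main theorem}. Recall that Donaldson's ansatz encodes a toric scalar-flat K\"ahler metric on a toric surface with moment polytope $P$ by a positive, axially symmetric harmonic function $\phi$ on the half-plane $\h=\{(x,r):r>0\}$ (equivalently, a harmonic function on $\R^3$ invariant under the rotations fixing the $x$-axis): scalar-flatness is \emph{exactly} the equation $\phi_{xx}+\phi_{rr}+\tfrac{1}{r}\phi_r=0$, while the combinatorics of $P$ --- the edges $e_1,\dots,e_n$ in cyclic order with primitive inward conormals $\nu_k$, and the vertices $v_k=e_k\cap e_{k+1}$ --- is recorded by the rod structure of $\phi$ (and of its harmonic conjugate) along the axis $\{r=0\}$: over the rod of $e_k$ the metric degenerates in the direction dual to $\nu_k$, and the local normal form of $\phi$ near that rod dictates whether the $4$-manifold is smooth there, has a cone singularity transverse to the divisor $e_k$, or a Poincar\'e cusp. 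The starting point is the local computation already used in the proof of Theorem~\ref{main theorem}: prescribing the symplectic potential to be $u=\tfrac{1}{2\theta_i}\ell_i\log\ell_i+(\text{function smooth up to }e_i)$ near $e_i$, with $\ell_i$ the affine defining function of $e_i$, produces precisely a transverse cone angle $2\pi\theta_i$ along $D_i$; letting $\theta_i\to1$ recovers the Delzant (smooth) condition and $\theta_i\to0$ the cusp condition. Through the ansatz this corresponds to rescaling the weight of the rod of $D_i$ --- equivalently the coefficient of the corresponding source of $\phi$ --- by $\theta_i^{-1}$, still a finite, admissible weight, so the underlying \emph{linear} problem $\Delta\phi=0$ is unchanged in nature.

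\textbf{Step 1: explicit solution.} Following \cite{AS12}, and as in the proof of Theorem~\ref{main theorem}, I would write the required $\phi$ explicitly as a finite superposition of the basic axisymmetric harmonic functions (fundamental solutions $c_k\,|(x,r)-(p_k,0)|^{-1}$ supported at points $p_k$ of the axis), the weights $c_k$ being fixed by the conormals $\nu_k$ so as to close the metric up smoothly, \emph{except} that along the rod of each $D_i$ the relevant weight is rescaled by $\theta_i^{-1}$, together with the affine/harmonic-at-infinity term whose free coefficients will govern the remaining end. One then recovers the symplectic potential $u$, hence the K\"ahler metric, via the ansatz.

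\textbf{Step 2: verification and asymptotics.} It remains to check that this candidate is genuine. First, that $\Hess\,u$ is positive definite on the interior of $P$ --- so that $u$ defines an honest toric K\"ahler metric on the open orbit --- for every $\theta_i\in(0,1)$ and every member of the two-parameter family. Second, that along each edge $e_k$ with $k\ne i$ and at every vertex not adjacent to a $D_i$ the metric satisfies Abreu's smooth boundary conditions, hence extends smoothly across the corresponding toric stratum; this also identifies the induced complex structure with the given one on $X$, as in the remark following Theorem~\ref{main theorem}. Third, that along each $e_i$ the metric has exactly (and no worse than) the prescribed conical singularity of angle $2\pi\theta_i$, with the correct mixed smooth/conical normal-crossing model at the vertices where $e_i$ meets a smooth edge. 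Finally, since the conical rescaling is localized to the rods of $D$ and leaves the data controlling the remaining end exactly as in the Poincar\'e-type construction, the asymptotic analysis of Theorem~\ref{theorem asymptotic behavior} carries over verbatim: the distinguished choice of the harmonic-at-infinity term yields an ALE end, and the two-parameter family of admissible such terms yields ends asymptotic to the generalized, resp.\ exceptional, Taub-NUT metrics. Combining the last two points gives the two bulleted statements.

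\textbf{Main obstacle.} I expect the substance to lie in the first and third items of Step 2: proving positivity of $\Hess\,u$ throughout the interior \emph{uniformly} in $\theta_i\in(0,1)$ and over the whole two-parameter family, and pinning down the regularity at the vertices where a conical edge abuts a smooth one --- i.e.\ showing that there the metric is modeled, in normal-crossing coordinates, on a flat $2$-cone of angle $2\pi\theta_i$ times $\C$, and on nothing more degenerate. This is also exactly where the hypothesis $\theta_i\in(0,1)$ enters: for $\theta_i\ge1$ either the positivity or the corner model fails, paralleling the exclusion of cone angles $\ge2\pi$ in the compact toric picture.
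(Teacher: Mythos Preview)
Your overall strategy---rerun the Abreu--Sena-Dias ansatz with the weight attached to each conical edge rescaled by $\theta_i^{-1}$, then verify positivity, boundary behavior, and asymptotics---is exactly the paper's approach. But two concrete points in your Step~1 are off and would derail the construction if followed literally.

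First, the building blocks. You describe $\phi$ as a superposition of ``fundamental solutions $c_k\,|(x,r)-(p_k,0)|^{-1}$.'' Those $1/\rho$ terms are precisely the \emph{extra} ingredients introduced in Theorem~\ref{Theorem unbounded PTK SFK} to produce the Poincar\'e cusp; they are \emph{absent} from the conical construction. The paper's $\xi_1,\xi_2$ for the conical case use only the log terms $\tfrac12\log(H+a_i^{(\theta)}+\rho_i^{(\theta)})$ (plus $\log r$ and the affine $\alpha H$), exactly as in \cite{AS12}, with the sole modification that the coefficients $(\alpha_i,\beta_i)$ are replaced by $(\alpha_i',\beta_i')=(\alpha_i/\theta_i,\beta_i/\theta_i)$ on the conical edges. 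If you instead kept $1/\rho$ sources you would recover cusp behavior, not a finite cone angle.

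Second, you do not mention that the node positions $a_j$ on the $H$-axis must shift. The paper shows (via the edge-length computation $a_j^{(\theta)}-a_{j-1}^{(\theta)}=\theta_j\,L_j/(2\pi|\nu_j|^2)$ for $j\in J$) that the rods corresponding to conical edges shrink by the factor $\theta_j$, and works out the resulting relations between $a_j^{(\theta)}$ and the original $a_j'$. This is where most of the actual verification (that $x$ gives a proper homeomorphism onto $\partial P$) takes place---not in positivity, which in the paper is a one-line observation: since $(\alpha_i',\beta_i')$ is a positive multiple of $(\alpha_i,\beta_i)$, every determinant in the $\det D\xi>0$ computation of \cite{AS12} keeps its sign. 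Your ``main obstacle'' is thus misplaced, and your suggestion that $\theta_i\ge 1$ breaks positivity is not borne out: the argument is insensitive to whether $\theta_i$ is above or below $1$.
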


Besides the general case where $X$ is strictly unbounded, we construct toric scalar-flat K\"ahler metrics of Poincar\'e type when the unbounded edges of the momentum polytope are parallel:

\begin{thm}\label{theorem parallel edges}
    Consider the same setting as in Theorem~\ref{main theorem} except that the unbounded edges of $X$ are parallel. On $X\backslash D$, we have a one-parameter family of toric scalar-flat K\"ahler metrics with Poincar\'e type singularity along $D$ and are asymptotic to the model product metric on $S^2\times\R^2$ on the remaining end.
\end{thm}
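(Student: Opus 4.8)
The plan is to run the same machine as for Theorem~\ref{main theorem} --- Donaldson's ansatz \cite{Don09} for toric scalar-flat K\"ahler surfaces, as used by Abreu and Sena-Dias --- but adapted to the combinatorial type forced by parallel unbounded edges, in which the ``wedge at infinity'' of the strictly unbounded case is replaced by a ``half-strip at infinity''. First I would normalize the momentum polytope $P$ of $X$: after an $\mathrm{AGL}(2,\Z)$ change of coordinates one may assume the two unbounded edges are the vertical rays $\{x_1=0\}$ and $\{x_1=\ell\}$, so $P=\{(x_1,x_2):0\le x_1\le\ell,\ x_2\ge g(x_1)\}$ for a piecewise-affine $g$ cutting out finitely many bounded bottom edges; the divisor $D=\sum_iD_i$ then corresponds to a set of pairwise non-adjacent edges of $P$, and the complex manifold $X\backslash D$ has momentum image $P$ with those edges deleted. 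As in the earlier sections, a toric scalar-flat K\"ahler metric of Poincar\'e type (Definition~\ref{definition PT}) on $X\backslash D$ is encoded --- after a partial Legendre transform --- by a positive function $V$ which is rotationally invariant harmonic on $\R^3$, equivalently an axially symmetric positive harmonic function on a half-plane $\{\rho>0\}\subset\R^2$, carrying a point singularity of the fixed Gibbons-Hawking weight at the axis point corresponding to each vertex of $P$ that survives the deletion (this gives the smoothness/Delzant condition there and along the kept bottom edges), together with the $-\log$-type boundary behaviour along the deleted edges that reproduces the standard cusp $\omega_{\Delta^*}$ in the transverse direction.

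Second, I would write $V$ down explicitly. The difference from Theorem~\ref{main theorem} is that the relevant ``background'' is no longer flat $\C^2$ (nor a minimal resolution of $\C^2\slash\Gamma$) but the product $S^2\times\R^2$, whose half-strip Guillemin potential supplies an explicit rotationally invariant harmonic $V_0$ compatible with the vertical edges at infinity; the potential for $X\backslash D$ is then $V=V_0$ plus the finite superposition of point sources at the surviving vertices, and the one free parameter is (essentially) the constant that one is still allowed to add to $V$. In the strictly unbounded case a second admissible homogeneous term, linear along the axis, opens up the generalized and exceptional Taub-NUT directions; here such a term would ``tilt'' the end and destroy the parallel, strip-like structure, which is precisely why the family collapses from two parameters to one. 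With the formula for $V$ in hand, the positivity of $V$ on the open half-plane (so that the induced K\"ahler metric is positive definite), the completeness of the metric on $X\backslash D$, and the identification of the induced complex structure with that of $X$ away from $D$ all go through as in the strictly unbounded case.

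Third, the two asymptotic statements. The Poincar\'e-type behaviour along each $D_i$ is local: it is read off from the $-\log$ boundary data of $V$ exactly as in Theorem~\ref{theorem asymptotic behavior}, yielding near $D_i$ a splitting of the metric, up to lower order, into the cusp metric on the punctured disk and a smooth metric on $D_i$. For the remaining end I would study $V$ as $x_2\to+\infty$: the width of the strip stabilizes to $\ell$, $V$ tends to the constant fixed by $V_0$ (shifted by the chosen parameter), and the point-source terms decay, so the metric converges to a product of a fixed round $S^2$ and a flat $\R^2$, with the attendant quadratic volume growth; the only genuinely analytic point is to quantify the convergence rate, which is handled by estimating the tails of the explicit superposition.

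The step I expect to be the main obstacle is the analysis of this half-strip end. Unlike in the wedge case, the background $V_0$ does not decay at infinity, so the strict positivity of $V$ --- indispensable for a genuine K\"ahler metric --- must be re-established by a maximum-principle argument tailored to the strip; completeness, and the precise matching with the product model $S^2\times\R^2$, likewise have to be proved directly rather than imported from the strictly unbounded setting. Bundled with this is the bookkeeping that pins the dimension of the family at one, i.e.\ that no further homogeneous solution can be added to $V$ without violating positivity, the cusp boundary data, or the parallel structure at infinity.
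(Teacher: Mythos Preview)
Your proposal rests on a framework that is not the one used in this paper, and as written it would not produce the full family claimed. You encode the metric by a single positive rotationally invariant harmonic function $V$ on $\R^3$ with Gibbons--Hawking point sources; that ansatz yields hyperk\"ahler (Ricci-flat) metrics with $S^1$-symmetry, not general toric scalar-flat K\"ahler metrics. The paper's earlier sections, and hence also this proof, work instead with Donaldson's ansatz (Theorem~\ref{theorem local model}): two axi-symmetric harmonic functions $\xi_1,\xi_2$ on $\h$ satisfying $\det D\xi>0$, from which the symplectic potential is recovered via $du=\xi_1\,dx_1+\xi_2\,dx_2$. The Poincar\'e-type behaviour along $\ell_k$ does not come from a $-\log$ boundary datum of a single $V$ but from the specific $1/\rho$ contributions $-\tfrac{\Lambda_k}{2}\nu_k/\sqrt{(H+a_{k-1})^2+r^2}$ inserted into $(\xi_1,\xi_2)$ exactly as in Theorem~\ref{Theorem unbounded PTK SFK}; these, after integrating $du$, produce the $-\Lambda_k\log\ell_k$ term in $u$ and hence the cusp.

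Consequently your account of the parameter count is off. The paper's proof observes that every step of Theorem~\ref{Theorem unbounded PTK SFK} goes through verbatim except for the admissible choice of the linear term $\nu H$ in $(\xi_1,\xi_2)$: since $\nu_1=-\nu_d$, the two inequalities $\det(\nu,\nu_1)\ge 0$ and $\det(\nu,\nu_d)\ge 0$ force $\det(\nu,\nu_1)=0$, i.e.\ $\nu$ is constrained to the ray parallel to $\nu_1$ (with the sign fixed by $\det(\nu,\nu_k)\ge 0$ for $k\in I$). That ray is the one-parameter family; the linear term is still present, merely of restricted direction, and the parameter is its magnitude, not an additive constant in some $V$. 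For the asymptotics at the remaining end the paper does not redo positivity or completeness via a strip maximum principle; it simply compares $\det D\xi$ with Weber's $\det D\xi_{Web}$ for the same polytope and with the model product metric on $S^2\times\R^2$, noting from (\ref{det Dxi}) that the extra $1/\rho$ terms contribute only $O(\rho^{-2})$, whence $\det D\xi=\det D\xi_{prod}+O(\rho^{-2})$.
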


The scalar-flat metrics we constructed belong to a particular class of Poincar\'e type metrics, characterized by the specific behavior of their potential functions along the divisor, which we denote as the $S_{\al, \beta}$ type (Definition~\ref{definition S type}). We have the following \textit{uniqueness} result:

\begin{thm}(Theorem~\ref{theorem uniqueness at the end})\label{theorem uniqueness}
    Given the same setting as in Theorem~\ref{main theorem}. Assume $g$ is a toric scalar-flat K\"ahler metric on $X\backslash D$, and its symplectic potential $u$ is of $S_{\al, \beta}$ type along $D$, then $g$ can only be one of the metrics constructed in Theorem~\ref{main theorem}.
\end{thm}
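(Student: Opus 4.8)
The plan is to reduce everything to Donaldson's ansatz \cite{Don09} and then close with a Liouville-type rigidity. First I would pass from $g$ to its symplectic potential $u$ on the moment polytope $P$ of $X$; that $g$ is compatible with this same polytope away from $D$ follows from the toric classification together with the discussion preceding Theorem~\ref{main theorem}. Scalar-flatness of $g$ is equivalent to Abreu's equation $\sum_{i,j}\partial_i\partial_j u^{ij}=0$, where $(u^{ij})$ is the inverse Hessian of $u$, and the hypotheses prescribe the boundary behavior of $u$ along all of $\partial P$: the Guillemin conditions on the compact edges (so that $g$ extends as a genuine metric across the corresponding exceptional curves), the $S_{\al,\beta}$ condition of Definition~\ref{definition S type} along the edges corresponding to the components $D_i$ (encoding the Poincar\'e-cusp asymptotics of Definition~\ref{definition PT}), and the prescribed ALE or (generalized / exceptional) Taub-NUT asymptotics along the two unbounded edges. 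The metrics of Theorem~\ref{main theorem} realize specific choices of exactly this data.

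Next I would feed $u$ into Donaldson's ansatz: a toric scalar-flat K\"ahler surface corresponds to an axially symmetric harmonic function $V$ on a half-plane $\mathcal H=\{(r,z):r\ge 0\}$ together with a ``rod structure'' on the axis $\{r=0\}$, in the spirit of the Weyl / Gibbons--Hawking description. Under this dictionary, edges of $P$ become rods on the axis, edge normals become rod directions, the Guillemin condition becomes smoothness at the interior rod endpoints, the $S_{\al,\beta}$ condition becomes a specific singular (semi-infinite, in the cusp direction) rod datum along the $D_i$-edges, and the behavior of $V$ at infinity in $\mathcal H$ is precisely what separates the ALE end (where $V$ decays to a forced constant, giving the asymptotic cone $\C^2/\Gamma$) from the Taub-NUT-type ends (where $V$ tends to a positive constant, with a possible additional linear term). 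Thus each metric of Theorem~\ref{main theorem} corresponds to $V$ written as an explicit superposition of rod potentials plus an affine function of $z$.

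Uniqueness is then a Liouville statement. If $g$ has an $S_{\al,\beta}$-type potential with one of the admissible ends, its Donaldson function $\widetilde V$ carries the same rod structure as one of the constructed ones, so $h=\widetilde V-V$ is harmonic and axially symmetric, and --- the singular rod contributions having cancelled --- extends across the entire axis, hence is a rotation-invariant harmonic function on all of $\R^3$. The prescribed asymptotics force at most linear growth for $h$ at infinity, so $h$ is affine; invariance under rotation about the axis leaves $h=a+bz$. In the ALE case the decay requirement forces $a=b=0$, giving uniqueness; in the Taub-NUT cases the two surviving parameters are exactly the two-parameter family of Theorem~\ref{main theorem} (and, by the same argument, one parameter survives in the parallel-edge setting of Theorem~\ref{theorem parallel edges}). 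Identifying which member $h$ produces and inverting the ansatz then pins $g$ down.

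The main obstacle is the dictionary itself, together with the Liouville step along the non-compact parts of $\partial P$ and at the end. Concretely: (i) one must verify that the $S_{\al,\beta}$ condition forces exactly the claimed singular rod datum and tolerates no further singular harmonic modes (dipole or higher) along the $D_i$-edges --- a local analysis near a Poincar\'e cusp using the precise form of Definition~\ref{definition S type}; and (ii) the Phragm\'en--Lindel\"of / Liouville argument has to be run on a domain carrying both Poincar\'e cusps and an ALE / ALF / $S^2\times\R^2$ end, so one must a priori exclude harmonic functions that grow into a cusp or along the end. The cusp region --- of infinite extent but exponentially collapsing volume --- is the delicate part; there the completeness of $g$ and the assumed asymptotic model enter through barrier constructions and weighted elliptic estimates.
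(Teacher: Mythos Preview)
Your high-level plan --- Donaldson's ansatz plus a Liouville-type rigidity on the axi-symmetric harmonic data --- is the right one, and it is essentially what the paper does. But two of your steps are either missing or set up differently from how the paper makes them work.

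\textbf{The missing step: global isothermal coordinates.} You write ``feed $u$ into Donaldson's ansatz'' as though the passage from $u$ to half-plane data is automatic. It is not. From $u$ one defines $r=(\det\Hess u)^{-1/2}$ and its harmonic conjugate $H$, obtaining $z=H+ir:P^{\circ}\to\h$; the paper's first and longest lemma is that this map is a \emph{global bijection} onto $\h$, extending continuously to $\partial P\setminus\ell_I\to\partial\h\setminus\bigcup_{k\in I}(-a_k,0)$. This follows Sena-Dias \cite{SD21} and uses the real locus $X_{\R}$, its orientable double cover, a degree argument for injectivity, the uniformization theorem, and Schwarz reflection for surjectivity, with new care at the cusp points $(-a_k,0)$. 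Without this you cannot even compare the harmonic data of $g$ and of a reference metric on the \emph{same} half-plane, so your ``$\widetilde V-V$'' is not yet a well-defined function on $\h$.

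\textbf{The Liouville step is run on a different object, in $\R^5$ rather than $\R^3$.} You propose to subtract the harmonic potentials and argue the difference extends across the axis to a rotation-invariant harmonic function on $\R^3$ of at most linear growth. The paper instead compares the \emph{inverse moment maps}: with $\mu=z^{-1}$ and $\mu_{ALE}$ the corresponding map for the ALE reference, one shows $\mu_0:=\mu-\mu_{ALE}=r^2f$ with $f$ smooth on $\overline{\h}\setminus\bigcup_{k\in I}(-a_k,0)$ and satisfying $f_{HH}+f_{rr}+3f_r/r=0$, i.e.\ $f$ is axi-symmetric harmonic in $\R^5$. The upper bound for $f\cdot\nu_1$ comes not from decay at the end but from \emph{positivity inside the polytope} ($\ell_1(\mu)\ge 0$) together with the explicit linear growth $|\mu_{ALE}|\le C\sqrt{H^2+r^2+1}$; a mean-value argument over spheres in $\R^5$ (chosen to avoid the finitely many cusp points, which are now codimension~$5$) then forces $f$ to be constant, whence $\xi-\xi_{ALE}=H\cdot\nu$. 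This is exactly the two-parameter freedom in Theorem~\ref{main theorem}. Your worry about harmonic modes at the cusp is well-founded in $\R^3$ --- a $1/\rho_{k-1}$ singularity is perfectly allowed there --- and the paper's passage to $\R^5$ is precisely what sidesteps it, rather than the barrier/weighted-estimate machinery you sketch.
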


Naturally, we would ask if we still have the uniqueness result without assuming $u$ to be of $S_{\al, \beta}$ type: 

\begin{?}(Strong uniqueness)
    Without assuming the potential function is of $S_{\al, \beta}$ type along the divisor in Theorem~\ref{theorem uniqueness}, can we still obtain the uniqueness result? 
\end{?}

A related question is to determine, locally, whether $S_{\al, \beta}$ type represents the only Guillemin boundary behavior for scalar-flat K\"ahler metric of Poincar\'e type.

\paragraph{Outline of the article.} In Section~\ref{Section prelim}, we discuss the preliminaries of the construction. In Section~\ref{Section construction of PT}, under a specific boundary condition, we give an explicit construction of toric scalar-flat K\"ahler metrics of Poincar\'e type using Donaldson's local ansatz for scalar-flat K\"ahler metrics, and discuss their asymptotic behavior, hence proving Theorem~\ref{main theorem} and Theorem~\ref{theorem parallel edges}. We also include a discussion on the example of the Hwang-Singer metric. In Section~\ref{Section construction of conic}, we use similar arguments to construct a family of conical toric scalar-flat K\"ahler metrics. In the Appendix \ref{Section}, we show the uniqueness result with prescribed explicit boundary behavior along the divisor.

\paragraph{Acknowledgements} The author is deeply grateful to her advisor, Song Sun, for inspiring discussions and patient guidance. She also sincerely thanks Vestislav Apostolov, Charles Cifarelli, and Lars Martin Sektnan for valuable suggestions on an earlier version of this work, and Rosa Sena-Dias for clarifying a question addressed in \cite{AS12}. The work was partially supported by NSF Grant DMS-2304692 and conducted in part during the author's stay at IASM, whose support is appreciated.

\section{Preliminaries}\label{Section prelim}

In this section, we recall some basics of toric K\"ahler metrics on toric symplectic $4$-manifolds and discuss the local ansatz for finding scalar-flat K\"ahler metrics. First, we recall the basic definition of a toric symplectic $4$-manifold:

\begin{definition}\label{toric manifold}
    A symplectic $4$-manifold $(X, \omega)$ is said to be \textbf{toric} if it admits an effective Hamiltonian $\mathbb{T}^2$-action $\tau$ of the standard torus to the diffeomorphism group of $(X, \omega)$ such that the corresponding moment map $\mu: X\rightarrow\R^2$ is proper onto its image $P$.
\end{definition}

Here the moment map of the $\mathbb{T}^2$-action is a map $\mu: X\rightarrow \R^2$ such that $\iota_{\xi}\omega=-d\mu$ for each infinitesimal generator $\xi$ of $\mathbb{T}^2$. For a compact symplectic $4$-manifold, the moment image of $\mu$ is the convex hull of the image of fixed points of $\mathbb{T}^2$ in $X$, and the classical Atiyah-Guillemin-Sternberg tells us this image is a polytope. For a non-compact symplectic $4$-manifold, we first introduce the definition of the moment polytope:

\begin{definition}(\cite{AS12}, Definition 2.2)
    We say a convex polytope $P\subset\R^2$ is a moment polytope if

    \begin{enumerate}[(i)]
    \item for each edge, we can find a primitive vector of $\Z^2$, which is an interior normal to this edge;
    \item for each pair of intersecting edges, their chosen interior normals form a $\Z$-basis of $\Z^2$. 
    \end{enumerate}
\end{definition}

We say two moment polytopes are equivalent if there exists a translation in $\R^2$ and a $GL(2, \Z)$ transformation mapping one to the other, and two symplectic toric manifolds are equivalent if there is an equivariant symplectomorphism mapping one to the other. Delzant's theorem \cite{delzant1988hamiltoniens} tells us in the compact setting, the moment polytope determines the symplectic toric $4$-manifold up to equivariant symplectomorphism. It turns out that in the non-compact setting, we also have the following correspondence:

\begin{prop}(\cite{KL15})\label{correspondene for noncompact}
    There is a bijective correspondence between the equivalence class of symplectic toric $4$-manifolds and the equivalence class of moment polytopes. 
\end{prop}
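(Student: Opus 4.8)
The plan is to mimic Delzant's construction and its inverse, adapting each step to account for non-compactness and the resulting properness requirement on $\mu$. In one direction, given a moment polytope $P\subset\R^2$, I would build a symplectic toric $4$-manifold $X_P$ by the Delzant symplectic-reduction procedure: write $P=\bigcap_{k=1}^N\{x\in\R^2 : \langle x,\nu_k\rangle\ge c_k\}$ with the $\nu_k\in\Z^2$ the chosen primitive inward normals of the edges, form the map $\Z^N\to\Z^2$ sending the $k$-th generator to $\nu_k$, let $\mathfrak{n}$ be the kernel of the induced map $\mathbb{T}^N\to\mathbb{T}^2$ (a subtorus since the $\nu_k$ span, using condition (ii) locally), and take $X_P$ to be the symplectic quotient of an open subset of $\C^N$ (cut out so that the quotient is smooth — here one uses the smoothness/Delzant condition (ii) at every vertex) by $\mathfrak{n}$. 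The residual $\mathbb{T}^2=\mathbb{T}^N/\mathfrak{n}$ action is Hamiltonian and toric; the key point to verify, absent in the compact case, is that the induced moment map $\mu:X_P\to\R^2$ is \emph{proper onto} $P$, which follows because $P$ is an intersection of finitely many half-planes and properness can be checked edge by edge / vertex by vertex using the local model $\C^2\to\R^2_{\ge 0}$ (shifted). One also checks that equivalent polytopes — related by a translation and a $GL(2,\Z)$ transformation — yield equivariantly symplectomorphic manifolds, the translation changing $\mu$ by a constant and the $GL(2,\Z)$ piece being realized by an automorphism of $\mathbb{T}^2$.

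In the other direction, given $(X,\omega,\tau)$ toric with proper moment map $\mu$ onto its image $P$, I would first argue that $P$ is a convex polytope satisfying (i) and (ii). Convexity and the local polyhedral structure come from a non-compact, properness-adapted version of the Atiyah--Guillemin--Sternberg convexity theorem (this is exactly where properness of $\mu$ is used, and is the content one must invoke or cite, e.g. from \cite{KL15} or the non-compact convexity literature); the integrality condition (i) and the basis condition (ii) at vertices follow from the equivariant Darboux / local normal form theorem for Hamiltonian torus actions near fixed points and near points on a single edge. One then shows $X$ is equivariantly symplectomorphic to $X_P$: the standard argument builds the symplectomorphism first over the open dense orbit (an action-angle / $\mathbb{T}^2\times P^{\circ}$ piece, where $\omega=\sum dx_i\wedge d\phi_i$ in action-angle coordinates, and properness of $\mu$ guarantees the fibers over the interior are the full torus) and then extends it across the strata over the edges and vertices using the uniqueness part of the local normal form. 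Finally, injectivity of the correspondence amounts to: two toric manifolds with the same polytope up to equivalence are equivariantly symplectomorphic — which is the statement just proved — and surjectivity is the construction $P\mapsto X_P$ together with the computation that the moment polytope of $X_P$ is $P$.

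The main obstacle, and the place where the non-compact case genuinely differs from Delzant's original, is twofold: (a) establishing the correct non-compact convexity/polyhedrality statement — that properness of $\mu$ onto its image forces $P$ to be a bona fide polytope of the type in the definition, rather than, say, a convex region with infinitely many or "missing" faces — and (b) controlling the behavior at infinity when gluing, i.e. checking that the equivariant symplectomorphism constructed stratum-by-stratum is actually well-defined and smooth globally despite $X$ being non-compact, which again reduces to the properness of $\mu$ ensuring the preimages $\mu^{-1}(K)$ are compact for $K\subset P$ compact and hence the local-to-global patching is uniform. Since the statement is attributed to \cite{KL15}, in the write-up I would either cite that reference for these two points or reproduce the convexity input and then give the gluing argument in the form sketched above.

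\endgroup
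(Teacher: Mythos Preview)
The paper does not give a proof of this proposition at all: it is stated with a citation to \cite{KL15} and then used as a black box. So there is no ``paper's own proof'' to compare your attempt against. Your final sentence already anticipates this --- citing \cite{KL15} is exactly what the paper does, and for the purposes of this article that is the complete argument.

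That said, your sketch is a reasonable outline of the Karshon--Lerman result, and you have correctly identified the two genuine non-compact issues: the convexity/polyhedrality of the moment image under a properness hypothesis, and the global gluing over an unbounded polytope. One caution: the Delzant quotient construction you describe in the forward direction produces $X_P$ as a reduction of an \emph{open subset} of $\C^N$, and in the non-compact case (e.g.\ when $P$ has unbounded edges, so some $\ell_i$ are missing) one has to be a bit careful about which open subset and about smoothness of the quotient --- the kernel subtorus may fail to be connected or the level set may fail to be a manifold if one is sloppy. The Karshon--Lerman paper handles this by working with ``unimodular local embeddings'' and symplectic toric bundles rather than a single global quotient, which is a somewhat different organizational scheme than the one you outline. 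If you were to actually write this up rather than cite it, that would be the place to look for the correct formulation.
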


We are particularly interested in the following class of non-compact symplectic toric $4$-manifold:

\begin{def-prop}(\cite{AS12} Definition 2.4, Proposition 2.8)\label{definition strictly unbounded})
A symplectic toric $4$-manifold is said to be \textbf{strictly unbounded} if the following equivalent conditions hold:

\begin{enumerate}[(i)]
    \item the image of the moment map, $P$, is an unbounded polytope with finitely many edges, with the unbounded edges being non-parallel;
    \item $X$ as a complex surface, arises as a finite sequence of blow-ups of a minimal resolution of $\C^2\slash\Gamma$ for some finite cyclic group $\Gamma\subset U(2)$ such that $\C^2\slash\Gamma$ has an isolated singularity point at the origin.
\end{enumerate}
\end{def-prop}

Let $P$ be a moment polytope given by 

\begin{equation}\label{definition of moment polutope}
P\coloneqq\{x\in \R^2: \ell_i(x)\coloneqq\langle x, \nu_i\rangle+\lambda_i\ge 0, i=1, \cdots, d\}.
\end{equation}

Here $\nu_i=(\alpha_i, \beta_i)\in\mathbb{Z}^2$ are the primitive interior normals to the edges. We order its edges so that $\ell_1$, $\ell_d$ are the unbounded edges and $\ell_i\cap\ell_{i+1}\ne\emptyset$ for $i=1, \cdots, d-1$. From \cite{AS12} Remark~2.5, we know we can further assume the Delzant condition for the polytope:

$$\det(\nu_i, \nu_{i+1})=-1, \text{ for }i=1, \cdots, d-1.$$

Let $(X_P, \omega_P, \tau_P)$ be the associated symplectic $4$-manifold of $P$ with moment map $\mu_P$. It admits a canonical integrable, torus-invariant, compatible complex structure $J_P$. We denote the resulting K\"ahler surface by $(X_P, \omega_P, J_P, g_P)$. Let $P^{\circ}$ be the interior of $P$, and consider $X^{\circ}_P\coloneqq\mu_P^{-1}(P^{\circ})$, then 

$$X^{\circ}_P\cong P^{\circ}\times \mathbb{T}^2=\{(x, \theta): x=(x_1, x_2)\in P^{\circ}, \theta=(\theta_1, \theta_2)\in\R^2\slash\Z^2\}.$$

Here $(x, \theta)$ are interpreted as the \textit{\textbf{action-angle coordinates}} for $\omega_P$, i.e.,

$$\omega_P=dx_1\wedge d\theta_1+dx_2\wedge d\theta_2.$$

From \cite{Gui94}, we know the symplectic potential $u_P\in C^{\infty}(P^{\circ})$ is written as

$$u_P=\displaystyle\frac{1}{2}\displaystyle\sum_{i=1}^d\ell_i(x)\log\ell_i(x).$$

The metric $g_P$ is given by

$$g_P=\displaystyle\sum_{i, j=1}^2((\Hess u_P)_{ij}dx_i\otimes dx_j+(\Hess u_P)^{ij}d\theta_i\otimes d\theta_j).$$

From \cite{Abr98} and \cite{apostolov2004hamiltonian}, given any toric complex structure $J$ which is $\omega_P$-compatible, there exist action-angle coordinates $(x, \theta)$ on $P^{\circ}$ such that for some symmetric and positive-definite $2\times 2$ matrix $U(x)$, we can write $J$ in the following form

$$J=-\displaystyle\sum_{i, j=1}^2(U(x)^{ij}\displaystyle\frac{\partial}{\partial x^i}\otimes d\theta_j+U(x)_{ij}\displaystyle\frac{\partial}{\partial \theta^i}\otimes dx_j).$$

Furthermore, the integrability of $J$ is equivalent to the existence of $u\in C^{\infty}(P^{\circ})$ such that $U(x)=\Hess_x(u)$. Then $u$ is the potential corresponding to $J$, and the K\"ahler metric is written as

\begin{equation}\label{Kahler metric potential}
    g=\displaystyle\sum_{i, j=1}^2((\Hess u)_{ij}dx_i\otimes dx_j+(\Hess u)^{ij}dx_i\otimes dx_j).
\end{equation}

For simplicity concern, we will use $u_{ij}, u^{ij}$ to denote $(\Hess u)_{ij}, (\Hess u)^{ij}$ respectively. From \cite{Abr03}, \cite{Abr01}, we know when the Hessian of the symplectic potential $u$ on $P^{\circ}$ is positive-definite and the boundary behavior of $u$ is specified by the Guillemin's boundary condition, it determines a complex structure on $X^{\circ}_P$ which extends to $(X_P, \omega_P, \tau_P)$. We say $u$ satisfies Guillemin's boundary condition if modulo a smooth function,

   \begin{equation}\label{Guillemin boundary condition}
	u(x)=\displaystyle\frac{1}{2}\displaystyle\sum_{i=1}^d\ell_i(x)\log\ell_i(x),
   \end{equation}
   
and its restriction to the interior of each face of $P$ is strictly convex and smooth. 

\begin{definition}(\cite{AAS17}, Definition 4.2)
	Given $P$, write $L\coloneqq\{\ell_1(x), \cdots, \ell_d(x)\}$. We say a symplectic potential $u: P^{\circ}\rightarrow\R$ belongs to the class $S(P, L)$ if it is smooth, strictly convex, and satisfies the Guillemin boundary condition.
\end{definition}

We mention that as discussed in \cite{apostolov2004hamiltonian} and \cite{AAS17} Proposition~4.3, there is an equivalent characterization of $S(P, L)$, which we refer to as the first-order boundary conditions. 

\medskip

This article focuses on finding scalar-flat K\"ahler metrics. Direct calculations show the scalar curvature of the metric has the following expression:

$$s=-\displaystyle\sum_{i, j}\displaystyle\frac{\partial^2u^{ij}}{\partial x_i\partial x_j}.$$

Then the scalar-flat equation we aim at solving becomes $\displaystyle\sum_{i, j}\displaystyle\frac{\partial^2u^{ij}}{\partial x_i\partial x_j}=0.$ In \cite{Don09}, Donaldson gave a reformulation of Joyce's construction in \cite{Joy95}, which allows us to write down explicit symplectic potentials of scalar-flat K\"ahler metrics on complex surfaces. The key is to use the \textbf{\textit{axi-symmetric harmonic function}} as the local model. More precisely:

\begin{thm}(\cite{Don09}, local model)\label{theorem local model}
	Let $\xi_1, \xi_2$ be two solutions to 
	
	\begin{equation}\label{axi-symmetric harmonic function}
	\displaystyle\frac{\partial^2\xi}{\partial H^2}+\displaystyle\frac{\partial^2\xi}{\partial r^2}+\displaystyle\frac{1}{r}\displaystyle\frac{\partial\xi}{\partial r}=0
    \end{equation}
    
    on 
    
    $$\h\coloneqq\{(H, r)\in\R^2: r>0\}.$$
    
    Then the $1$-forms
    
    $$\epsilon_1=r\left(\displaystyle\frac{\partial\xi_2}{\partial r}dH-\displaystyle\frac{\partial\xi_2}{\partial H}dr\right), \quad \epsilon_2=-r\left(\displaystyle\frac{\partial\xi_1}{\partial r}dH-\displaystyle\frac{\partial\xi_1}{\partial H}dr\right)$$ 
    
    are closed. Let $x_1, x_2$ be their primitives, then the $1$-form $\ep=\xi_1dx_1+\xi_2dx_2$ is also closed. Let $u$ be its primitive. Assume for $\xi=(\xi_1, \xi_2)$, we have 
    
    \begin{equation}\label{condition local model}
    	\det D\xi>0.
    \end{equation}
    
    Then $u$ is a local symplectic potential for a scalar-flat K\"ahler toric metric on $\R^4$.    
\end{thm}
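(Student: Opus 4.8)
The plan is to verify the chain of closedness statements in the order the forms are introduced, reducing each one to the single PDE \eqref{axi-symmetric harmonic function}, and then to identify $u$ as a symplectic potential by checking that its Hessian reproduces the scalar-flat structure. First I would check that $\epsilon_1$ and $\epsilon_2$ are closed: writing $\epsilon_1 = r\,\xi_{2,r}\,dH - r\,\xi_{2,H}\,dr$, the condition $d\epsilon_1 = 0$ is
\[
\frac{\partial}{\partial r}\!\left(r\,\frac{\partial\xi_2}{\partial r}\right) + \frac{\partial}{\partial H}\!\left(r\,\frac{\partial\xi_2}{\partial H}\right) = 0,
\]
which upon expanding is exactly $r\left(\xi_{2,HH} + \xi_{2,rr} + \tfrac1r\xi_{2,r}\right) = 0$, i.e. the axi-symmetric harmonic equation for $\xi_2$; similarly for $\epsilon_2$ using $\xi_1$. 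Since $\h$ is simply connected, primitives $x_1, x_2$ exist.

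Next I would show $\ep = \xi_1\,dx_1 + \xi_2\,dx_2$ is closed. The computation $d\ep = d\xi_1\wedge dx_1 + d\xi_2\wedge dx_2$, and since $dx_1 = \epsilon_1 = r(\xi_{2,r}dH - \xi_{2,H}dr)$ and $dx_2 = \epsilon_2 = -r(\xi_{1,r}dH - \xi_{1,H}dr)$, expanding $d\xi_i = \xi_{i,H}dH + \xi_{i,r}dr$ and collecting the coefficient of $dH\wedge dr$ gives
\[
r\left(\xi_{1,r}\xi_{2,H} - \xi_{1,H}\xi_{2,r}\right) - r\left(\xi_{2,r}\xi_{1,H} - \xi_{2,H}\xi_{1,r}\right) = 0,
\]
the two groups cancelling identically — so $\ep$ is closed regardless of harmonicity, purely because of the antisymmetric pairing built into the definitions of $\epsilon_1,\epsilon_2$. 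Again simple-connectedness yields a primitive $u$ with $\partial u/\partial x_i = \xi_i$, so that $(x_1,x_2)\mapsto(\xi_1,\xi_2)$ is the gradient of $u$ and $\Hess_x u = D\xi$; the hypothesis \eqref{condition local model} then says this Hessian is positive-definite (and symmetric, which also follows since it is a Hessian), so $u$ defines a genuine toric Kähler metric via \eqref{Kahler metric potential} on the region where $(x_1,x_2)$ are valid coordinates, i.e. on an open set of $\R^4 = \R^2\times\mathbb{T}^2$ (or its universal-type local model).

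The substantive point — and where I expect the real work to lie — is showing that this metric is \emph{scalar-flat}, i.e. that $\sum_{i,j}\partial^2 u^{ij}/\partial x_i\partial x_j = 0$ where $u^{ij}$ is the inverse Hessian. The strategy is to change variables from $(x_1,x_2)$ to $(H,r)$: since $\Hess_x u = D\xi = D(\xi_1,\xi_2)$, the inverse Hessian $u^{ij}$ is (up to the Jacobian relating $x$ and $(H,r)$) controlled by $D(H,r)/D(x_1,x_2)$, and the defining relations $dx_1 = r(\xi_{2,r}dH - \xi_{2,H}dr)$, $dx_2 = -r(\xi_{1,r}dH - \xi_{1,H}dr)$ express $\partial(x_1,x_2)/\partial(H,r)$ explicitly in terms of first derivatives of the $\xi_i$. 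One then computes $u^{ij}$ in these coordinates, and the second-derivative identity $\sum_{i,j}\partial_{x_i}\partial_{x_j}u^{ij}=0$ should collapse, after using the harmonicity of $\xi_1$ and $\xi_2$ and the commuting of mixed partials, to a tautology. I would expect the cleanest route is the one Donaldson/Joyce use: exhibit the Kähler form or the complex structure directly from the $(H,r)$ data — for instance recognizing $r$ and $H$ as the standard coordinates in a Gibbons–Hawking-type or Joyce-type ansatz — so that scalar-flatness is inherited from a known structural feature (the metric being conformal to a hyperkähler or a product model) rather than verified by brute-force differentiation. The main obstacle is bookkeeping the coordinate change and keeping the inverse-Hessian computation organized enough that the harmonic equation visibly does the cancelling; everything else is a short exact-differential-form argument.

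\endgroup
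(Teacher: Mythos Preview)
The paper does not actually prove this theorem: it is stated as a cited result from \cite{Don09} (Donaldson's reformulation of Joyce's construction), so there is no in-paper proof to compare against. Your outline of the closedness of $\epsilon_1,\epsilon_2$ and of $\ep$ is correct in spirit, and your plan for scalar-flatness (pass to $(H,r)$-coordinates and let the axi-symmetric harmonic equation do the cancellation, or invoke the Joyce ansatz structurally) is the right one.

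There is, however, a genuine error in the middle step. You assert that $\Hess_x u = D\xi$, but $D\xi$ here denotes the Jacobian of $\xi$ with respect to $(H,r)$, not with respect to $(x_1,x_2)$. The correct relation is obtained via the chain rule: since $\partial u/\partial x_i = \xi_i$, one has $\Hess_x u = D_{(H,r)}\xi \cdot \bigl(D_{(H,r)}x\bigr)^{-1}$. Computing $D_{(H,r)}x$ from the definitions of $\epsilon_1,\epsilon_2$ gives $D_{(H,r)}x = r\,\mathrm{adj}\bigl((D\xi)^T\bigr)$, and hence
\[
\Hess_x u \;=\; \frac{1}{r\,\det D\xi}\, D\xi\,(D\xi)^T.
\]
This is automatically symmetric and, since $r>0$, is positive-definite precisely when $\det D\xi \neq 0$; the sign condition $\det D\xi > 0$ is what fixes the orientation so that $(H,r)\mapsto(x_1,x_2)$ is a genuine local diffeomorphism. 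So your conclusion survives, but the identification $\Hess_x u = D\xi$ as written is false, and this formula is in fact what makes the scalar-flat verification tractable (it also gives immediately $\det\Hess_x u = r^{-2}$, matching the relation $r = (\det\Hess_x u)^{-1/2}$ used repeatedly later in the paper). Your displayed cancellation for $d\ep$ also has a sign slip (as written the two terms add rather than cancel); the correct coefficient of $dH\wedge dr$ is $-r(\xi_{1,H}\xi_{2,H}+\xi_{1,r}\xi_{2,r}) + r(\xi_{1,H}\xi_{2,H}+\xi_{1,r}\xi_{2,r}) = 0$.
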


Some known solutions to (\ref{axi-symmetric harmonic function}) include

\begin{equation}\label{solutions to axi-symmetric}
    aH+b, \quad a\log r+b, \quad \displaystyle\frac{1}{2}\log\left(H+\displaystyle\sqrt{(H+a)^2+r^2}\right)
\end{equation}

where $a, b\in\R$. In \cite{AS12}, the authors used these solutions to construct scalar-flat K\"ahler metrics on unbounded symplectic toric $4$-manifolds. These metrics belong to the class $S(P, L)$, and are precisely those whose complex structures are equivariantly biholomorphic to $J_P$. 

\medskip

The situation is different for toric scalar-flat K\"ahler metrics of Poincar\'e type. We first recall the definition of Poincar\'e type K\"ahler metrics:

\begin{definition} (Poincar\'e type K\"ahler metric, \cite{Auv17})\label{definition PT}
    Given $(X, \omega_0)$ a compact complex manifold and $D$ a smooth divisor in $X$ with $\sigma\in H^0(X, \mathcal{O}(D))$ being a holomorphic defining section. Fix $\lambda$ a sufficiently large constant such that
    
    $$\omega_h\coloneqq \omega_0-\displaystyle\sqrt{-1}\partial\bar{\partial}\log(\lambda-\log(|\sigma|^2))$$

    is a positive $(1, 1)$-form on $X\backslash D$. We say a closed, smooth $(1,1)$-form  
    $$\omega_{PT}\coloneqq \omega_0+\sqrt{-1}\partial\bar{\partial}\varphi$$
    
    on $X\backslash D$ is a \textbf{Poincar\'e type K\"ahler metric} if 
    \begin{itemize}
        \item $\omega_{PT}$ is \textit{quasi-isometric} to $\omega_{h}$, which means there exists some $C>0$, such that $\frac{1}{C}\omega_h\le\omega_{PT}\le C\omega_{h}$ and $\forall i\ge 1$, $\sup_{X\backslash D}|\nabla^i_{\omega_h}\omega_{PT}|<\infty$;
        \item $\varphi$ is a smooth function on $X\backslash D$ with $\varphi=O(h)$, and $\forall i\ge 1$, $\sup_{X\backslash D}|\nabla^i_{\omega_{h}}\varphi|<\infty$.
    \end{itemize}
\end{definition}

Let $D$ be a torus-invariant divisor in $X$, and let $\ell_F$ be the edge corresponding to $D$ in the moment polytope. In \cite{AAS17}, Section 4.3, the authors introduced a special type of Guillemin boundary condition for symplectic potential $u$, which gives rise to a Poincar\'e type K\"ahler metric. For the setting of complex surfaces, we recall the definition as follows:

\begin{definition}(\cite{AAS17}, Definition 4.16)\label{definition S type}
	Given $\al\in\R^+, \beta\in \R$, we say a symplectic potential $u: P^{\circ}\rightarrow\R$ belongs to the class $S_{\al, \beta}(P, L, F)$ if it is strictly convex and smooth on $P^{\circ}$, its restriction to the interior of each edge of $P$ is strictly convex and smooth, and
		
	\begin{equation}
		u+(\al+\beta\ell_F)\log\ell_F-\displaystyle\frac{1}{2}\displaystyle\sum_{j=2}^d\ell_j\log\ell_j
	\end{equation}
	
	is smooth on $P$.
\end{definition} 

From \cite{AAS17} Theorem 4.18, we know for $u\in S_{\al, \beta}(P, L, F)$, the induced K\"ahler metric (\ref{Kahler metric potential}) exhibits Poincar\'e type behavior along $D$. Let $d\lambda$ be the Lebesgue measure on $\R^2$, we define a measure $d\lambda_i$ on $\ell_i$ by 

$$-d\ell_i\wedge d\lambda_i=-\nu_i\wedge d\lambda_i=d\lambda.$$

On $\ell_F$, the induced measure is zero for the class $S(P, L, F)$, which is obtained by sending the corresponding label $\ell_i$ to infinity. Although the Guillemin boundary condition does not have a straightforward extension to describe the behavior of $u$ near $\ell_F$, the first-order boundary condition does, as pointed out in \cite{AAS17} Definition~4.6. Note $S_{\al, \beta}(P, L, F)$ is only a proper subset of $S(P, L, F)$, which we see by comparing \cite{AAS17} Definition~4.6 and Proposition~4.19. 

\medskip

We write 

$$u_{P, F, \al, \beta}=\displaystyle\frac{1}{2}\displaystyle\sum_{j=2}^d\ell_j\log\ell_j-(\al+\beta\ell_F)\log\ell_F$$

as the potential of the model metric for the class $S_{\al, \beta}(P, L, F)$. To construct scalar-flat K\"ahler metrics of Poincar\'e type in this class, we need other solutions to (\ref{axi-symmetric harmonic function}) besides the ones (\ref{solutions to axi-symmetric}). We consider

$$\xi=\displaystyle\frac{1}{2}\displaystyle\frac{1}{\displaystyle\sqrt{(H+a)^2+r^2}}, \quad a\in\R.$$

This solution, together with the solutions to (\ref{axi-symmetric harmonic function}) mentioned above, serve as local models for our construction in the next section.

\section{Construction of scalar-flat K\"ahler metrics of Poincar\'e type}\label{Section construction of PT}

Given $X$ a strictly unbounded symplectic toric $4$-manifold and let $P$ be its moment polytope defined by (\ref{definition of moment polutope}). Write $L=\{\ell_1(x), \cdots, \ell_d(x)\}$. Let $D=\displaystyle\sum_{i=1}^m D_i$ be a smooth divisor on $X$ such that each $D_j$ is fixed by the torus action. Let $\ell_{i_j}$ be its image on the moment polytope. Assume 

\begin{equation}\label{non-adjacent index}
  i_1>1, i_2>i_1+1, \cdots, d>i_m>i_{m-1}+1.
\end{equation}

Let $I=\{i_1, \cdots i_m\}\subset\{1, 2, \cdots, d\}$ be the index set and    $\ell_{I}=\bigcup_{j=1}^m\ell_{i_j}$ be the union of the edges corresponding to $D_i$, then $P\backslash \ell_I$ is the moment polytope of $X\backslash D$. 

\begin{figure}[h]
\begin{center}
    \begin{tikzpicture}
       \filldraw[color=violet!1, fill=violet!8, ultra thin] (2,3) --(1, 2.9)-- (0.8,2.8) -- (0.6,2.7) --(0.2, 2.4) -- (0, 2) --(-0.2, 1.4)-- (-0.3, 1) --(-0,1, 0.6)-- (0, 0.4) --(0.5, 0.1)--(1, 0) -- (2, 0);
       \filldraw[color=violet!60, fill=violet!8] (2,3) --(1, 2.9)-- (0.8,2.8) -- (0.6,2.7) (0.2, 2.4) -- (0, 2) --(-0.2, 1.4)-- (-0.3, 1) --(-0,1, 0.6)-- (0, 0.4) (0.5, 0.1)--(1, 0) -- (2, 0);
       \draw[color=violet!60, dashed](0, 0.4)--node[below] {$\ell_{i_1}$} (0.5, 0.1);
       \draw[color=violet!60, dashed](0.6,2.7)--node[above] {$\ell_{i_m}$} (0.2, 2.4);
       \draw[color=violet!60, dashed](0, 2)--node[left] {$...$}(-0.2, 1.4); 
       \draw[color=violet!60, dashed](-0.3, 1)--node[left] {$...$}(-0,1, 0.6); 
       \draw[color=violet!60, dashed](1, 2.9)--node[above] {$...$}(0.8,2.8); 
       \draw[color=violet!60, dashed](2,3)--node[above] {$\ell_d$}(1, 2.9);
        \draw[color=violet!60, dashed](0.5, 0.1)--node[below] {$...$}(1, 0); 
       \draw[color=violet!60, dashed](1, 0)--node[below] {$\ell_1$}(2, 0);
    \end{tikzpicture}
    \caption{The moment polytope $P\backslash\ell_I$} 
\end{center}    
 \end{figure} 

We prove Theorem~\ref{main theorem} by giving an explicit construction of the toric scalar-flat K\"ahler metrics of Poincar\'e type on $X\backslash D$. Consider $\nu=(\alpha, \beta)$ a vector in $\mathbb{R}^2$ s.t. 

\begin{equation}\label{conditions on nu}
    \det(\nu, \nu_1), \det(\nu, \nu_d)\ge 0.
\end{equation}

As discussed in \cite{AS12}, since $P$ is strictly unbounded, this set of vectors forms a cone bounded by $-\nu_1$ and $\nu_d$.

\begin{thm} \label{Theorem unbounded PTK SFK}

For $X, D, P, I, \nu$ defined as above, there exist constants $\Lambda_{i_1}, \cdots, \Lambda_{i_m}>0$ determined by the polytope $P$, for $(H, r)\in \h$, set

$$\xi_1\coloneqq\alpha_1\log r+\displaystyle\frac{1}{2}\displaystyle\sum_{i=1}^{d-1}(\alpha_{i+1}-\alpha_i)\log\left(H+a_i+\displaystyle\sqrt{(H+a_i)^2+r^2}\right)-\displaystyle\frac{1}{2}\displaystyle\sum_{k\in I}\displaystyle\frac{\Lambda_k\alpha_k}{\displaystyle\sqrt{(H+a_{k-1})^2+r^2}}+\alpha H,$$

$$\xi_2\coloneqq\beta_1\log r+\displaystyle\frac{1}{2}\displaystyle\sum_{i=1}^{d-1}(\beta_{i+1}-\beta_i)\log\left(H+a_i+\displaystyle\sqrt{(H+a_i)^2+r^2}\right)-\displaystyle\frac{1}{2}\displaystyle\sum_{k\in I}\displaystyle\frac{\Lambda_k\beta_k}{\displaystyle\sqrt{(H+a_{k-1})^2+r^2}}+\beta H.$$

Here $a_1, \cdots, a_{d-1}$ are real numbers determined by $P$ satisfying 

\begin{equation}\label{conditions on polytope reals}
    a_{j-1}>a_j \text{ if } j\notin I \text{ and } a_{j-1}=a_j \text{ if } j\in I. 
\end{equation}

Let $x_1, x_2$ be the primitives of 

$$\epsilon_1=r\left(\displaystyle\frac{\partial\xi_2}{\partial r}dH-\displaystyle\frac{\partial\xi_2}{\partial H}dr\right), \quad \epsilon_2=-r\left(\displaystyle\frac{\partial\xi_1}{\partial r}dH-\displaystyle\frac{\partial\xi_1}{\partial H}dr\right).$$

Then they define the momentum action coordinates on $P^{\circ}$ of some toric scalar-flat K\"ahler metric of Poincar\'e type on $X\backslash D$ whose symplectic potential satisfies

$$du=\xi_1dx_1+\xi_2dx_2.$$

Furthermore, for each $k\in I$,

\begin{equation}\label{target boundary behavior}
u\in S_{\frac{\Lambda_k}{2}, \frac{1}{2}\det(\nu_{k-1}, \nu_{k+1})}(P, L, \ell_k).
\end{equation}

\end{thm}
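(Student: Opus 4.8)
The plan is to verify the hypotheses of Donaldson's local model (Theorem~\ref{theorem local model}) for the explicit $\xi_1,\xi_2$, and then to identify the resulting $u$ as a global symplectic potential on $P\backslash\ell_I$ with the claimed $S_{\al,\beta}$ boundary behavior. First I would observe that each summand appearing in $\xi_1$ and $\xi_2$ is one of the axi-symmetric harmonic functions listed in \eqref{solutions to axi-symmetric} together with the new solution $\tfrac12(H+a)^{-2+?}$... concretely $\alpha_1\log r$ and $\alpha H$ are of the first two types, $\tfrac12(\alpha_{i+1}-\alpha_i)\log(H+a_i+\sqrt{(H+a_i)^2+r^2})$ is the third type (shifted by $a_i$), and $-\tfrac12\Lambda_k\alpha_k(H+a_{k-1})^2{}^{-1/2}$ wait, it is the reciprocal-square-root solution $\tfrac12((H+a)^2+r^2)^{-1/2}$ introduced at the end of Section~\ref{Section prelim}; since \eqref{axi-symmetric harmonic function} is linear, $\xi_1$ and $\xi_2$ are harmonic. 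Then I would compute $\epsilon_1,\epsilon_2$ and check they are closed (this is automatic from Theorem~\ref{theorem local model}), let $x_1,x_2$ be their primitives, and check $\epsilon=\xi_1dx_1+\xi_2dx_2$ is closed so that $u$ exists locally; again the closedness is guaranteed by the theorem once $\xi_1,\xi_2$ solve \eqref{axi-symmetric harmonic function}. The one genuinely analytic input at this stage is the positivity condition \eqref{condition local model}, $\det D\xi>0$ on $\h$: I expect this to follow by writing $\det D\xi = \det\frac{\partial(\xi_1,\xi_2)}{\partial(x_1,x_2)}$ and using the change of variables together with the fact that $(\alpha_i,\beta_i)$ are consecutive primitive normals with $\det(\nu_i,\nu_{i+1})=-1$, so that the "log" terms contribute a definite-sign sum of rank-one pieces; the correction terms coming from the $\Lambda_k$ summands must be controlled by choosing the constants $\Lambda_k$ small enough relative to the geometry of $P$, which is exactly why the theorem only asserts existence of \emph{some} positive $\Lambda_{i_1},\dots,\Lambda_{i_m}$ rather than all.

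Next I would pin down the global picture: I would show that the image of $\h$ under $(H,r)\mapsto(x_1(H,r),x_2(H,r))$ is exactly $P^\circ$ (or rather $P\backslash\ell_I$ in the appropriate sense), with the boundary strata $r\to 0$, $H\to\pm\infty$, and the points $H=-a_j$ on $r=0$ mapping to the edges $\ell_j$ and vertices of $P$. The constants $a_1,\dots,a_{d-1}$ are forced by matching the jumps of the axi-symmetric data across $r=0$ to the normals $\nu_j$: crossing the interval between $H=-a_{j-1}$ and $H=-a_j$ on the axis should correspond to traversing edge $\ell_j$, and the coalescence condition \eqref{conditions on polytope reals}, $a_{j-1}=a_j$ for $j\in I$, is precisely what collapses the edge $\ell_j$ to a puncture/cusp — this is the mechanism producing the Poincaré-type end. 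I would then match the asymptotics at the two unbounded edges $\ell_1,\ell_d$ using the $\alpha\log r$, $\alpha H$ terms, noting that condition \eqref{conditions on nu} on $\nu=(\alpha,\beta)$ is exactly what makes the $aH+b$ and $a\log r+b$ contributions point into the allowed cone, giving either the ALE end (when $\nu=0$) or the generalized/exceptional Taub-NUT ends (when $\nu\ne 0$) promised in Theorem~\ref{main theorem}.

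The heart of the statement \eqref{target boundary behavior} is a local computation near each edge $\ell_k$ with $k\in I$. Here I would change to a coordinate adapted to $\ell_k$: since $a_{k-1}=a_k$, near $H=-a_k$ on the axis the two logarithmic terms with coefficients $\tfrac12(\alpha_k-\alpha_{k-1})$ and $\tfrac12(\alpha_{k+1}-\alpha_k)$ combine, while the reciprocal-square-root term $-\tfrac12\Lambda_k\alpha_k((H+a_k)^2+r^2)^{-1/2}$ blows up. Writing $\rho=\sqrt{(H+a_k)^2+r^2}$ and using $\ell_k\sim$ (affine function vanishing on the edge), I would expand $\xi_1,\xi_2$ and integrate to get $u$ in terms of $\ell_k$, identifying the singular part as $-(\tfrac{\Lambda_k}{2}+\tfrac12\det(\nu_{k-1},\nu_{k+1})\,\ell_k)\log\ell_k$ plus $\tfrac12\sum_{j\ne k}\ell_j\log\ell_j$ plus a smooth remainder — this is the defining condition of $S_{\Lambda_k/2,\,\frac12\det(\nu_{k-1},\nu_{k+1})}(P,L,\ell_k)$ in Definition~\ref{definition S type}. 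The coefficient $\beta=\tfrac12\det(\nu_{k-1},\nu_{k+1})$ should emerge from the linear-in-$\ell_k$ part of the combined logarithmic terms once the normals $\nu_{k-1},\nu_{k+1}$ flanking the removed edge are used (the Delzant relations let one rewrite $\nu_{k-1}+\nu_{k+1}$ in terms of $\nu_k$ and this determinant). I expect the main obstacle to be precisely this boundary expansion: one must show that, after subtracting the prescribed singular terms, \emph{all} derivatives extend continuously up to $\ell_k$ and that the restriction to the edge interior is strictly convex and smooth — i.e. that the $\Lambda_k((H+a_k)^2+r^2)^{-1/2}$ insertion does not destroy the Guillemin-type regularity on the \emph{other} faces nor the convexity on $\ell_k$ itself. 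Controlling this uniformly, together with fixing the $\Lambda_k>0$ small enough to preserve $\det D\xi>0$ globally while still matching the polytope data, is where the real work lies; everything else is bookkeeping with the explicit harmonic functions.
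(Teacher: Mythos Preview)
Your overall architecture matches the paper's: verify \eqref{condition local model}, show $(H,r)\mapsto(x_1,x_2)$ is a proper diffeomorphism onto $P^\circ$ with the correct boundary identifications, and then expand $u$ near each cusp edge to read off the $S_{\alpha,\beta}$ class. However, there is a genuine misconception about the role of the constants $\Lambda_k$. You write that the $\Lambda_k$ contributions ``must be controlled by choosing the constants $\Lambda_k$ small enough'' to preserve $\det D\xi>0$, and that this smallness is why the theorem only asserts existence of \emph{some} $\Lambda_k$. This is not how the argument goes, and trying to establish positivity via a smallness argument would leave you with an arbitrary small $\Lambda_k$ that you cannot then match to the polytope.

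In the paper, $\det D\xi>0$ holds for \emph{all} $\Lambda_k>0$, unconditionally. One writes $\det D\xi$ as the already-positive Abreu--Sena-Dias determinant \eqref{AS's matrix} plus additional terms; each cross-term between a $\Lambda_k$-column and an $i$-column is shown non-negative by a direct sign analysis: $\beta_k\alpha_i-\alpha_k\beta_i$ has the same sign as $i-k$ by the Delzant relation, and a short manipulation using the ordering of the $a_i$ and $\rho_i^2=H_i^2+r^2$ shows the scalar factor $\rho_i(r^2+H_{k-1}H_{i-1})-\rho_{i-1}(r^2+H_{k-1}H_i)$ carries the matching sign. The cross-terms among the $\Lambda_k$'s and the $\nu$-term are handled the same way. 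The constants $\Lambda_k$ are then \emph{determined}, not merely bounded, by the polytope: requiring $x(H,0):\partial\h\setminus\{(-a_k,0)\}_{k\in I}\to\partial P\setminus\ell_I$ to hit exactly the edges of $P$ forces $\Lambda_k=a_k'-a_{k-1}'$, where the $a_j'$ are the parameters from \cite{AS12}. So ``there exist constants'' means ``the polytope picks them out uniquely'', not ``some sufficiently small choice works''; and the boundary expansion you sketch (combining $\rho_{k-1}\pm H_{k-1}$ with the flanking normals $\nu_{k\pm1}$ to extract the coefficient $\tfrac12\det(\nu_{k-1},\nu_{k+1})$) only lands in $S_{\Lambda_k/2,\cdot}$ for that specific value.
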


\begin{proof}

The first step is to show the assumption (\ref{condition local model}) in Theorem~\ref{theorem local model} is satisfied. We compute $D\xi$:

\begin{equation}\label{determinant}
    D\xi=\begin{pmatrix}
    \alpha+\displaystyle\frac{1}{2}\displaystyle\sum_{i=1}^{d-1}\displaystyle\frac{\alpha_{i+1}-\alpha_i}{\rho_i}+\displaystyle\frac{1}{2}\displaystyle\sum_{k\in I}\displaystyle\frac{\Lambda_k\alpha_kH_{k-1}}{\rho_{k-1}^3}&\displaystyle\frac{\alpha_1}{r}+\displaystyle\frac{1}{2}\displaystyle\sum_{i=1}^{d-1}\displaystyle\frac{(\alpha_{i+1}-\alpha_i)r}{\rho_i(\rho_i+H_i)}+\displaystyle\frac{1}{2}\displaystyle\sum_{k\in I}\displaystyle\frac{\Lambda_k\alpha_kr}{\rho_{k-1}^3}\\
    \beta+\displaystyle\frac{1}{2}\displaystyle\sum_{i=1}^{d-1}\displaystyle\frac{\beta_{i+1}-\beta_i}{\rho_i}+\displaystyle\frac{1}{2}\displaystyle\sum_{k\in I}\displaystyle\frac{\Lambda_k\beta_{k}H_{k-1}}{\rho_{k-1}^3}&\displaystyle\frac{\beta_1}{r}+\displaystyle\frac{1}{2}\displaystyle\sum_{i=1}^{d-1}\displaystyle\frac{(\beta_{i+1}-\beta_i)r}{\rho_i(\rho_i+H_i)}+\displaystyle\frac{1}{2}\displaystyle\sum_{k\in I}\displaystyle\frac{\Lambda_k\beta_kr}{\rho_{k-1}^3}
\end{pmatrix}.
\end{equation}

Here $H_i\coloneqq H+a_i$, $\rho_i\coloneqq\displaystyle\sqrt{H_i^2+r^2}$. Set $a_0\coloneqq-\infty$, $a_d\coloneqq\infty$ and set $H_0, H_d, \rho_0, \rho_d$ accordingly, we rewrite Equation~(\ref{determinant}) as

\begin{equation}\label{determinant rewritten}
    D\xi=\begin{pmatrix}
    \alpha+\displaystyle\frac{1}{2}\displaystyle\sum_{i=1}^{d}\alpha_i\left(\displaystyle\frac{1}{\rho_{i-1}}-\displaystyle\frac{1}{\rho_{i}}\right)+\displaystyle\frac{1}{2}\displaystyle\sum_{k\in I}\displaystyle\frac{\Lambda_k\alpha_{k}H_{k-1}}{\rho_{k-1}^3}&\displaystyle\frac{1}{2r}\displaystyle\sum_{i=1}^{d}\alpha_i\left(\displaystyle\frac{H_{i-1}}{\rho_{i-1}}-\displaystyle\frac{H_{i}}{\rho_{i}}\right)+\displaystyle\frac{1}{2}\displaystyle\sum_{k\in I}\displaystyle\frac{\Lambda_k\alpha_kr}{\rho_{k-1}^3}\\
    \beta+\displaystyle\frac{1}{2}\displaystyle\sum_{i=1}^{d}\beta_i\left(\displaystyle\frac{1}{\rho_{i-1}}-\displaystyle\frac{1}{\rho_{i}}\right)+\displaystyle\frac{1}{2}\displaystyle\sum_{k\in I}\displaystyle\frac{\Lambda_k\beta_{k}H_{k-1}}{\rho_{k-1}^3}&\displaystyle\frac{1}{2r}\displaystyle\sum_{i=1}^{d}\beta_i\left(\displaystyle\frac{H_{i-1}}{\rho_{i-1}}-\displaystyle\frac{H_{i}}{\rho_{i}}\right)+\displaystyle\frac{1}{2}\displaystyle\sum_{k\in I}\displaystyle\frac{\Lambda_k\beta_kr}{\rho_{k-1}^3}
\end{pmatrix}.
\end{equation}

Note from \cite{AS12} Theorem 4.1, we know the determinant of the following matrix is positive:

\begin{equation}\label{AS's matrix}
    \begin{pmatrix}
    \alpha+\displaystyle\frac{1}{2}\displaystyle\sum_{i=1}^{d}\alpha_i\left(\displaystyle\frac{1}{\rho_{i-1}}-\displaystyle\frac{1}{\rho_{i}}\right)&\displaystyle\frac{1}{2r}\displaystyle\sum_{i=1}^{d}\alpha_i\left(\displaystyle\frac{H_{i-1}}{\rho_{i-1}}-\displaystyle\frac{H_{i}}{\rho_{i}}\right)\\
    \beta+\displaystyle\frac{1}{2}\displaystyle\sum_{i=1}^{d}\beta_i\left(\displaystyle\frac{1}{\rho_{i-1}}-\displaystyle\frac{1}{\rho_{i}}\right)&\displaystyle\frac{1}{2r}\displaystyle\sum_{i=1}^{d}\beta_i\left(\displaystyle\frac{H_{i-1}}{\rho_{i-1}}-\displaystyle\frac{H_{i}}{\rho_{i}}\right)
\end{pmatrix}.
\end{equation}

Comparing the expression of $\det(D\xi)$ with the above, it suffices to show their difference is still positive. A key step in showing the positivity is the following lemma:

\begin{lem}
    Given $k\in I$, $\forall i$, the term involving $i$ and $k$ in the expression of $\det(D\xi)$ has the following expression and is non-negative:

\begin{equation}\label{component for cusp edge with normal edge}
    (\beta_k\alpha_i-\alpha_k\beta_i)\displaystyle\frac{1}{4r\rho_{k-1}^3}\left(\displaystyle\frac{r^2+H_{k-1}H_{i-1}}{\rho_{i-1}}-\displaystyle\frac{r^2+H_{k-1}H_i}{\rho_{i}}\right).
\end{equation}

\end{lem}

\begin{proof}

    We rewrite Equation~(\ref{component for cusp edge with normal edge}) as follows:
    
    $$(\beta_k\alpha_i-\alpha_k\beta_i)\displaystyle\frac{1}{4\rho_{k-1}^3}\left(r\left(\frac{1}{\rho_{i-1}}-\frac{1}{\rho_i}\right)+H_{k-1}\left(\frac{H_{i-1}}{r\rho_{i-1}}-\frac{H_i}{r\rho_i}\right)\right).$$
    
   Note for any $i$, $r, \rho_i>0$, and from $\det(\nu_i, \nu_{i+1})=-1$ we deduce that $\beta_k\alpha_i-\alpha_k\beta_i>0 \iff i>k$. Thus, it suffices to show

    \begin{equation}\label{claim i>k}
        \rho_i(r^2+H_{k-1}H_{i-1})-\rho_{i-1}(r^2+H_{k-1}H_i)>0 \text{ for }i>k
    \end{equation}

    and 

    \begin{equation}
        \rho_i(r^2+H_{k-1}H_{i-1})-\rho_{i-1}(r^2+H_{k-1}H_i)<0 \text{ for }i<k.
    \end{equation}

    When $i>k$, we rewrite the expression as

    \begin{equation}
        \rho_i(r^2+H_{k-1}H_{i-1})-\rho_{i-1}(r^2+H_{k-1}H_i)=\displaystyle\frac{(\rho_i-\rho_{i-1})r^2}{\rho_iH_{i-1}+\rho_{i-1}H_i}(\rho_i(H_{i-1}-H_{k-1})+\rho_{i-1}(H_i-H_{k-1})).
    \end{equation}

    We claim that $\rho_{i-1}H_i-\rho_iH_{i-1}>0$. It is because $f(x)=\displaystyle\frac{x}{\displaystyle\sqrt{x^2+y^2}}$ is an increasing function given $y>0$, then $\displaystyle\frac{H_i}{\displaystyle\sqrt{H_i^2+r^2}}>\displaystyle\frac{H_{i-1}}{\displaystyle\sqrt{H_{i-1}^2+r^2}}.$ Hence, we know

    \begin{equation}\label{fact01}
        \rho_iH_{i-1}+\rho_{i-1}H_i>0\iff (\rho_{i-1}H_i-\rho_iH_{i-1})(\rho_iH_{i-1}+\rho_{i-1}H_i)>0\iff r^2(H_i^2-H_{i-1}^2)>0.
    \end{equation}
    
    On the other hand, we have

    \begin{equation}\label{fact02}
        \rho_i-\rho_{i-1}>0\iff (\rho_i+\rho_{i-1})(\rho_i-\rho_{i-1})>0\iff H_i^2-H_{i-1}^2>0.
    \end{equation}

    Combining $H_{i-1}-H_{k-1}>0, H_i-H_{k-1}>0$ with (\ref{fact01}) and (\ref{fact02}), we obtain (\ref{claim i>k}). Similarly, for $i<k$, 

    \begin{equation}
        \rho_i(r^2+H_{k-1}H_{i-1})-\rho_{i-1}(r^2+H_{k-1}H_i)=\displaystyle\frac{(\rho_i-\rho_{i-1})r^2}{\rho_iH_{i-1}+\rho_{i-1}H_i}(\rho_i(H_{i-1}-H_{k-1})+\rho_{i-1}(H_i-H_{k-1}))<0.
    \end{equation}    
\end{proof}

For $k, k'\in I$, the term involving $k$ and $k'$ in $\displaystyle\sum_{k\in I}\displaystyle\frac{\al_kH_{k-1}}{\rho_{k-1}^3}\displaystyle\sum_{k\in I}\displaystyle\frac{\beta_kr}{\rho_{k-1}^3}-\displaystyle\sum_{k\in I}\displaystyle\frac{\al_kr}{\rho_{k-1}^3}\displaystyle\sum_{k\in I}\displaystyle\frac{\beta_{k}H_{k-1}}{\rho_{k-1}^3}$ has the following expression and is non-negative:

\begin{equation}\label{component within cusp edges}
    \displaystyle\frac{r}{\rho_k^3\rho_{k'}^3}(\alpha_k\beta_{k'}-\alpha_{k'}\beta_k)(H_{k-1}-H_{k'-1}).
\end{equation}

Finally, for $\nu$ satisfying (\ref{conditions on nu}), given $k\in I$, $\det D\xi$ changes by adding 

\begin{multline}\label{component for nu}
    \det(\nu, \nu_1)\left(\displaystyle\frac{1}{r}-\displaystyle\frac{r}{2\rho_1(H_1+\rho_1)}\right)+\displaystyle\frac{r}{2}\displaystyle\sum_{i=1}^{d-1}\det(\nu, \nu_i)\left(\displaystyle\frac{1}{\rho_{i-1}(H_{i-1}+\rho_{i-1})}-\displaystyle\frac{1}{\rho_i(H_i+\rho_i)}\right)\\+\displaystyle\frac{r}{2}\det(\nu, \nu_d)\displaystyle\frac{1}{\rho_d(H_d+\rho_d)}+(\al\beta_k-\beta\al_k)\displaystyle\frac{r}{\rho_{k-1}^3}.
\end{multline}

WLOG we assume one of the unbounded edges of $P$ is the $x_1$-axis, then $\nu_1=(0, 1)$. Consider the interior normals $\nu_i$ satisfying $\det(\nu_{i-1}, \nu_i)=-1$, $i=2, \cdots, d$, we obtain $\al_i>0$, $i=2, \cdots, d$ and $\displaystyle\frac{\beta_d}{\al_d}<\displaystyle\frac{\beta_{d-1}}{\al_{d-1}}<\cdots<\displaystyle\frac{\beta_2}{\al_2}$. The condition (\ref{conditions on nu}) implies  

$$\al\beta_k-\beta\al_k\ge0\text{ for }k\in I.$$

Thus by comparing with \cite{AS12} Equation~(7), we know the above additional term is non-negative. Combining (\ref{AS's matrix}), (\ref{component for cusp edge with normal edge}), (\ref{component within cusp edges}), and (\ref{component for nu}), we conclude $\det(D\xi)>0$. 

\medskip

Next, we prove that $x=(x_1, x_2)$ define global symplectic action coordinates on $P\backslash\ell_I$ and at the same time $u(x)$ has the desired boundary behavior on $\partial P\backslash\ell_I$. Note for 

$$\xi=\displaystyle\frac{1}{2\displaystyle\sqrt{(H+a)^2+r^2}},$$

the primitive of $\ep=r\left(\displaystyle\frac{\partial\xi}{\partial r}dH-\displaystyle\frac{\partial\xi}{\partial H}dr\right)$, up to constants, is given by

$$\displaystyle\frac{1}{2}\left(1-\displaystyle\frac{H+a}{\displaystyle\sqrt{(H+a)^2+r^2}}\right).$$

Then for $\nu=0$, up to constants, we have

$$x_1=\beta_1H+\displaystyle\frac{1}{2}\displaystyle\sum_{i=1}^{d-1}(\beta_{i+1}-\beta_i)(H_i-\rho_i)-\displaystyle\frac{1}{2}\displaystyle\sum_{k\in I}\Lambda_k\beta_k\left(1-\displaystyle\frac{H_{k-1}}{\rho_{k-1}}\right),$$

$$x_2=-\al_1H-\displaystyle\frac{1}{2}\displaystyle\sum_{i=1}^{d-1}(\al_{i+1}-\al_i)(H_i-\rho_i)+\displaystyle\frac{1}{2}\displaystyle\sum_{k\in I}\Lambda_k\al_k\left(1-\displaystyle\frac{H_{k-1}}{\rho_{k-1}}\right).$$

Then we see $x$ extends continuously to $r=0$ except at the points $(H, r)=(-a_k, 0)$ for $k\in I$. The point $(H, r)=(-a_k, 0)$ corresponds to the cusp edge $\ell_k$ for $k\in I$. Consider the behavior of $x$ in the intervals on the $H$-axis:

\begin{enumerate}[(i)]
    \item For $H>-a_1$, then $x_1=\beta_1H$, and $x_2=-\al_1H$;
    \item for $1\le j<i_1-1$, $-a_{j+1}<H<-a_j$, then $x_1=\beta_{j+1}H+\displaystyle\sum_{i=1}^ja_i(\beta_{i+1}-\beta_i)$, and $x_2=-\al_{j+1}H-\displaystyle\sum_{i=1}^ja_i(\al_{i+1}-\al_i)$;
    \item for $j=i_1-1$, $-a_{i_1+1}<H<-a_{i_1-1}=-a_{i_1}$, then $x_1=\beta_{i_1+1}H+\displaystyle\sum_{i=1}^{i_1}a_i(\beta_{i+1}-\beta_i)-\Lambda_{i_1}\beta_{i_1}$, and $x_2=-\al_{i_1+1}H-\displaystyle\sum_{i=1}^{i_1}a_i(\al_{i+1}-\al_i)+\Lambda_{i_1}\al_{i_1}$;
    \item similar calculations show for $2\le k\le m$, and $i_{k-1}+1\le j<i_k-1$, with $-a_{j+1}<H<-a_j$, we have $x_1=\beta_{j+1}H+\displaystyle\sum_{i=1}^ja_i(\beta_{i+1}-\beta_i)-\displaystyle\sum_{\ell=1}^{k-1}\Lambda_{i_{\ell}}\beta_{i_{\ell}}$, and $x_2=-\al_{j+1}H-\displaystyle\sum_{i=1}^ja_i(\al_{i+1}-\al_i)+\displaystyle\sum_{\ell=1}^{k-1}\Lambda_{i_{\ell}}\al_{i_{\ell}}$; for $j=i_k-1$, with $-a_{i_k+1}<H<-a_{i_k-1}=-a_{i_k}$, we have $x_1=\beta_{i_k+1}H+\displaystyle\sum_{i=1}^{i_k}a_i(\beta_{i+1}-\beta_i)-\displaystyle\sum_{\ell=1}^{k}\Lambda_{i_{\ell}}\beta_{i_{\ell}}$, and $x_2=-\al_{i_k+1}H-\displaystyle\sum_{i=1}^{i_k}a_i(\al_{i+1}-\al_i)+\displaystyle\sum_{\ell=1}^{k}\Lambda_{i_{\ell}}\al_{i_{\ell}}$;
    \item for $H<-a_{d-1}$, then $x_1=\beta_dH+\displaystyle\sum_{i=1}^{d-1}a_i(\beta_{i+1}-\beta_i)-\displaystyle\sum_{\ell=1}^{m}\Lambda_{i_{\ell}}\beta_{i_{\ell}}$, and $x_2=-\al_{d}H-\displaystyle\sum_{i=1}^{d-1}a_i(\al_{i+1}-\al_i)+\displaystyle\sum_{\ell=1}^{m}\Lambda_{i_{\ell}}\al_{i_{\ell}}$.
\end{enumerate}

We want to show the following:

\begin{enumerate}
    \item $x=(x_1, x_2)$ gives a proper homeomorphism

    \begin{equation}\label{proper homeomorphism}
    x (H, 0): \partial\h\backslash\bigcup_{k\in I}(a_{k-1}, 0)\rightarrow\partial P\backslash\ell_I.
    \end{equation}
    \item $u(x)$ modulo a smooth function is given by

     \begin{equation}\label{global boundary behavior}
         \displaystyle\frac{1}{2}\displaystyle\sum_{i\notin I}\ell_i(x)\log\ell_i(x)+\displaystyle\frac{1}{2}\displaystyle\sum_{k\in I}(\det(\nu_{k+1}, \nu_{k-1})\ell_k(x)-\Lambda_k)\log\ell_k(x).
     \end{equation}

\end{enumerate}

 First, we discuss the choice of $a_j$. Note from \cite{AS12} Theorem~1.2, there exist real numbers $a_1'<a_2'<\cdots<a_{d-1}'$ determined by $P$ such that for $1\le j\le d-1$, 

\begin{equation}\label{aj}
    \displaystyle\sum_{i=1}^ja_i'\det(\nu_{i+1}-\nu_i, \nu_{j+1})=\lambda_{j+1}.
\end{equation}

More precisely, from \cite{SD21} Lemma~7.2, we know $a_i'-a_{i-1}'=\displaystyle\frac{L_i}{2\pi|\nu_i|^2}$. Here $L_i$ is the length of the $i$th edge of $P$. On the other hand, from the boundary behavior of the non-cusp edges in (\ref{global boundary behavior}), with the same proof as \cite{SD21} Lemma~7.2, we have

$$a_i-a_{i-1}=\displaystyle\frac{L_i}{2\pi|\nu_i|^2}.$$

For $k\in I$, we set $a_k=a_{k-1}$. Later we will see that
 this ensures the desired boundary behavior. Then we obtain the following relations between $a_j$ and $a_j'$:

\begin{equation}\label{aj aj' relation}
    a_{j}=a_{j}' \text{ if }j<i_1; a_{j}=a_{j}'+\displaystyle\sum_{\ell=1}^k(a_{i_{\ell}-1}'-a_{i_{\ell}}')\text { if } i_k\le j<i_{k+1}; a_{j}=a_{j}'+\displaystyle\sum_{\ell=1}^m(a_{i_{\ell}-1}'-a_{i_{\ell}}')\text { if } i_{m}\le j<d.
\end{equation}

 First, we look at condition (\ref{proper homeomorphism}). For simplicity, we write $k=i_1$. Near the edge $\ell_{k+1}$, as $r=0$ and $-a_{k+1}<H<-a_k$, we have 
 
 $$x_1=\beta_{k+1}H+\displaystyle\sum_{i=1}^ka_i(\beta_{i+1}-\beta_i)-\Lambda_k\beta_k, \quad x_2=-\al_{k+1}H-\displaystyle\sum_{i=1}^ka_i(\al_{i+1}-\al_i)+\Lambda_k\al_k.$$
 
 Then condition~(\ref{proper homeomorphism}) $\ell_{k+1}(x)=0$ translates to

\begin{equation}\label{p1q1condtion1}
    \displaystyle\sum_{i=1}^ka_i\det(\nu_{i+1}-\nu_i, \nu_{k+1})+\Lambda_k=\lambda_{k+1}.
\end{equation}

Using (\ref{aj}) and the relation (\ref{aj aj' relation}), it simplifies to 

\begin{equation}\label{simplified p1q1condtion1}
    \Lambda_k=a_k'-a_{k-1}'.
\end{equation}

For edges $\ell_j$ with $i<k$, the argument is the same as the standard case as in \cite{AS12} Theorem 4.1; for $i=k+j$, with $1\le j< i_2-k$, the condition (\ref{proper homeomorphism}) $\ell_{k+j}(x)=0$ on $-a_{k+j}<H<a_{k+j-1}$ becomes

$$\displaystyle\sum_{i=1}^{k+j-1}a_i\det(\nu_{i+1}-\nu_i, \nu_{k+j})+\Lambda_k\det(\nu_{k+j}, \nu_{k})=\lambda_{k+j}.$$

Using (\ref{aj}) it suffices to check $\det(\nu_{k+j}, \nu_k)(\Lambda_k+a_{k-1}'-a_k')=0$, which holds from (\ref{simplified p1q1condtion1}). Thus we've shown condition~(\ref{global boundary behavior}) for $1\le j<i_2$ given the choice of $\Lambda_k$ as in (\ref{simplified p1q1condtion1}). It remains to apply the same procedure to indices $i_2, \cdots, i_m$ respectively. For simplicity, we write $t=i_2$. For $-a_{t+1}<H<-a_t$, we have 

$$x_1=\beta_{t+1}H+\displaystyle\sum_{i=1}^ta_i(\beta_{i+1}-\beta_i)-\Lambda_t\beta_t-\Lambda_k\beta_k, \quad x_2=-\al_{t+1}H-\displaystyle\sum_{i=1}^ta_i(\al_{i+1}-\al_i)+\Lambda_t\al_t+\Lambda_k\al_k.$$

Then condition~(\ref{proper homeomorphism}) $\ell_{t+1}(x)=0$ on $-a_{t+1}<H<-a_t$ translates to

\begin{equation}
    \displaystyle\sum_{i=1}^ta_i\det(\nu_{i+1}-\nu_i, \nu_{t+1})+\Lambda_t+\Lambda_k\det(\nu_{t+1}, \nu_k)=\lambda_{t+1},
\end{equation}
 
and it simplifies to 

\begin{equation}\label{p2q2condition1}
    \Lambda_t=a_t'-a_{t-1}'.
\end{equation}

For $i_2<i<i_3$, condition (\ref{proper homeomorphism}) can be shown in exactly the same way. For $i_3, \cdots, i_m$, it's now clear this procedure also works. We deduce that given the choices of $\Lambda_i=a_i'-a_{i-1}'$ for all $i\in I$, $x:(\overline{\h}, \partial\h)\rightarrow(P, \partial P)$ is a proper homeomorphism, and its restriction to $\h$ is a smooth proper diffeomorphism onto $P^{\circ}$.

\medskip

Next, we look at the condition~(\ref{global boundary behavior}). Near a non-cusp edge $\ell_i$, we have $\xi=\nu_i\log r+O(1)$, and the boundary condition is standard as desired. We focus on the situation near the cusp edge $\ell_k$. For this, we prove the following lemmas:

\begin{lem}
    Assume for $k\in I$, $\Lambda_k=a_k'-a_{k-1}'$, then 

    \begin{equation}\label{decouple to 2 boundary condition01}
        \ell_k(x)=\rho_{k-1}+O(r^2),
    \end{equation}
    
    and there exist smooth positive function $\dt_{k-1, k, k+1}$ such that
    
   \begin{equation}\label{decouple to 2 boundary condition02}
    \rho_{k-1}+H_{k-1}=\rho_{k-1}\cdot\displaystyle\frac{2\ell_{k+1}(x)+O(r^2)}{\dt_{k-1, k, k+1}(x)},\quad \rho_{k-1}-H_{k-1}=\rho_{k-1}\cdot\displaystyle\frac{2\ell_{k-1}(x)+O(r^2)}{\dt_{k-1, k, k+1}(x)}.
    \end{equation}
\end{lem}

\begin{proof}

We first consider $k=i_1$. As both $H_{k-1}$ and $r$ go to zero, for $i>k$, we have $H_i-\displaystyle\sqrt{H_i^2+r^2}=-\displaystyle\frac{r^2}{2a_i}$, and for $i<k-1$, we have $H_i-\displaystyle\sqrt{H_i^2+r^2}=2a_i$. Then for the behavior of $x$ near $\ell_k(x)=0$, we write

$$x_1=\beta_{k-1}H+\displaystyle\frac{1}{2}(\beta_{k+1}-\beta_{k-1})(H_{k-1}-\rho_{k-1})-\displaystyle\frac{1}{2}\Lambda_k\beta_k\left(1+\displaystyle\frac{H_{k-1}}{\rho_{k-1}}\right)+\displaystyle\sum_{i=1}^{k-2}a_i(\beta_{i+1}-\beta_i)+O(r^2),$$

$$x_2=-\al_{k-1}H-\displaystyle\frac{1}{2}(\al_{k+1}-\al_{k-1})(H_{k-1}-\rho_{k-1})+\displaystyle\frac{1}{2}\Lambda_k\al_k\left(1+\displaystyle\frac{H_{k-1}}{\rho_{k-1}}\right)-\displaystyle\sum_{i=1}^{k-2}a_i(\al_{i+1}-\al_i)+O(r^2).$$ 

Then

$$\nu_k\cdot x=\rho_{k-1}-a_{k-1}+\displaystyle\sum_{i=1}^{k-2}a_i\det(\nu_k, \nu_{i+1}-\nu_i)+O(r^2).$$

To prove Equation~(\ref{decouple to 2 boundary condition01}), it's equivalent to show

\begin{equation}\label{p1q1condtion2}
    \displaystyle\sum_{i=1}^{k-2}a_i\det(\nu_k, \nu_{i+1}-\nu_i)-a_{k-1}+\lambda_k=0.
\end{equation}

From the relations between $a_j$ and $a_j'$ in Equation~(\ref{aj aj' relation}) and the expression of $\lambda_j$ in Equation~(\ref{aj}), direct computation shows this automatically holds.

\medskip

Similarly, we have

$$\nu_{k-1}\cdot x=\displaystyle\frac{1}{2}\det(\nu_{k+1}, \nu_{k-1})(\rho_{k-1}-H_{k-1})+\displaystyle\frac{\Lambda_k}{2}(1-\displaystyle\frac{H_{k-1}}{\rho_{k-1}})+\displaystyle\sum_{i=1}^{k-2}a_i\det(\nu_{k-1}, \nu_{i+1}-\nu_i)+O(r^2).$$

 Under the normalization assumption $\det(\nu_j, \nu_{j+1})=-1$ for all $j$, straightforward calculations give

\begin{equation}\label{three buddy}
   \det(\nu_{j+1}, \nu_{j-1})=\displaystyle\frac{\al_{j+1}+\al_{j-1}}{\al_j}=\displaystyle\frac{\beta_{j+1}+\beta_{j-1}}{\beta_j}. 
\end{equation}

Combining (\ref{three buddy}) and (\ref{decouple to 2 boundary condition01}) with the above calculations, we obtain 

$$H_{k-1}=\rho_{k-1}\cdot\displaystyle\frac{(\nu_{k+1}-\nu_{k-1})\cdot x+\lambda_{k}\det(\nu_{k+1}, \nu_{k-1})+2\displaystyle\sum_{i=1}^{k-2}a_i\det(\nu_{k-1}, \nu_{i+1}-\nu_i)+\Lambda_k+O(r^2)}{(\nu_{k+1}+\nu_{k-1})\cdot x+\lambda_{k}\det(\nu_{k+1}, \nu_{k-1})+\Lambda_k+O(r^2)}.$$

Thus to prove Equation~(\ref{decouple to 2 boundary condition02}), it suffices to show

\begin{equation}\label{p1q1condtion3}
    \lambda_{k+1}=\Lambda_k+\lambda_k\det(\nu_{k+1}, \nu_{k-1})+\displaystyle\sum_{i=1}^{k-2}a_i\det(\nu_{k-1}, \nu_{i+1}-\nu_i),
\end{equation}

and

\begin{equation}\label{p1q1condtion4}
    \lambda_{k-1}=-\displaystyle\sum_{i=1}^{k-2}a_i\det(\nu_{k-1}, \nu_{i+1}-\nu_i).
\end{equation}

We claim that

\begin{equation}\label{p1q1final}
    \lambda_{k+1}-\lambda_k\det(\nu_{k+1}, \nu_{k-1})+\lambda_{k-1}-\Lambda_k=0.
\end{equation}

Using (\ref{aj}), we have

$$\lambda_{k+1}+\lambda_{k-1}=a_{k}-a_{k-1}+a_{k-1}\det(\nu_{k+1}, \nu_{k-1})+\displaystyle\sum_{i=1}^{k-2}a_i\det(\nu_{k-1}+\nu_{k+1}, \nu_{i+1}-\nu_i),$$

then by simplifying this equation with (\ref{aj aj' relation}), we obtain (\ref{p1q1final}). Then (\ref{p1q1condtion3}) is reduced to (\ref{p1q1condtion4}), which is straightforward from (\ref{aj}). Then we obtain Equation~(\ref{decouple to 2 boundary condition02}) with 

$$\dt_{k-1, k, k+1}=\ell_{k+1}(x)+\ell_{k-1}(x)+O(r^2).$$

It remains to show Equations~(\ref{decouple to 2 boundary condition01}) and (\ref{decouple to 2 boundary condition02}) for $i_2, \cdots, i_m$. Write $t=i_2$, near $\ell_t(x)=0$, we write

$$x_1=\beta_{t-1}H+\displaystyle\frac{1}{2}(\beta_{t+1}-\beta_{t-1})(H_{t-1}-\rho_{t-1})-\displaystyle\frac{1}{2}\Lambda_t\beta_t\left(1+\displaystyle\frac{H_{t-1}}{\rho_{t-1}}\right)+\displaystyle\sum_{i=1}^{t-2}a_i(\beta_{i+1}-\beta_i)-\Lambda_k\beta_k+O(r^2),$$

$$x_2=-\al_{t-1}H-\displaystyle\frac{1}{2}(\al_{t+1}-\al_{t-1})(H_{t-1}-\rho_{t-1})+\displaystyle\frac{1}{2}\Lambda_t\al_t\left(1+\displaystyle\frac{H_{t-1}}{\rho_{t-1}}\right)-\displaystyle\sum_{i=1}^{t-2}a_i(\al_{i+1}-\al_i)-\Lambda_k\al_k+O(r^2).$$

Then Equation~(\ref{decouple to 2 boundary condition01}) holds automatically under the assumption $\Lambda_t=a_t'-a_{t-1}'$, and the first equation in Equation~(\ref{decouple to 2 boundary condition02}) simplifies to

\begin{equation}\label{p2q2condtion3}
    0=\lambda_{t+1}-\lambda_t\det(\nu_{t+1}, \nu_{t-1})+\lambda_{t-1}-\Lambda_t.
\end{equation}

Direct computation as the previous situation shows this equation holds. For $i_3, \cdots, i_m$, it's now clear this procedure also works. 
\end{proof}

\begin{lem}\label{lemma boundary behavior}
    There exists a smooth and strictly positive function $\dt$ on $P$ such that

\begin{equation}\label{boundary behavior}
        \det(\Hess_x(u))=\left(\dt\prod_{i=1}^d\ell_i\prod_{k\in I}\ell_k\right)^{-1}.
    \end{equation}
\end{lem}

\begin{proof}
    From \cite{donaldson2009constant}, we know $r=\left(\det \Hess_x(u)\right)^{-1/2}$. Then it suffices to show there exists a smooth and strictly positive function $\dt$, such that

    \begin{equation}\label{equivalent boundary behavior}
        \prod_{i=1}^d\ell_i\prod_{k\in I}\ell_k=\displaystyle\frac{r^2}{\dt}.
    \end{equation}
    
    From the above discussions, we know

    $$\displaystyle\frac{\partial x_1}{\partial r}=-\displaystyle\frac{r}{2}\displaystyle\sum_{i=1}^{d-1}\displaystyle\frac{\beta_{i+1}-\beta_i}{\rho_i}-\displaystyle\frac{r}{2}\displaystyle\sum_{k\in I}\Lambda_k\beta_k\displaystyle\frac{H_{k-1}}{\rho_{k-1}^3},\quad \displaystyle\frac{\partial x_2}{\partial r}=\displaystyle\frac{r}{2}\displaystyle\sum_{i=1}^{d-1}\displaystyle\frac{\al_{i+1}-\al_i}{\rho_i}+\displaystyle\frac{r}{2}\displaystyle\sum_{k\in I}\Lambda_k\al_k\displaystyle\frac{H_{k-1}}{\rho_{k-1}^3}.$$

    Then 

    \begin{align}
        \displaystyle\frac{\partial \ell_j}{\partial r}&=\displaystyle\frac{\partial x_1}{\partial r}\al_j+\displaystyle\frac{\partial x_2}{\partial r}\beta_j\\
        &=\displaystyle\frac{r}{2}\displaystyle\sum_{i=1}^{d-1}\displaystyle\frac{\det(\nu_{i+1}-\nu_i, \nu_j)}{\rho_i}+\displaystyle\frac{r}{2}\displaystyle\sum_{k\in I}\displaystyle\frac{\Lambda_kH_{k-1}}{\rho_{k-1}^3}(\al_k\beta_j-\al_j\beta_k)\\
        &=\displaystyle\frac{r}{2}\left(-\displaystyle\frac{\det(\nu_1, \nu_j)}{\rho_1}+\displaystyle\sum_{i=2}^{d-1}\det(\nu_i, \nu_j)\left(\displaystyle\frac{1}{\rho_{i-1}}-\displaystyle\frac{1}{\rho_{i}}\right)+\displaystyle\frac{\det(\nu_d, \nu_j)}{\rho_{d-1}}\right)+\displaystyle\frac{r}{2}\displaystyle\sum_{k\in I}\displaystyle\frac{\Lambda_kH_{k-1}}{\rho_{k-1}^3}\det(\nu_k, \nu_j).\label{partial ell j partial r}
    \end{align}

    We want to show for $j\notin I$, as we approach each edge $E_j$ of $P$, $\displaystyle\frac{\partial\ell_j}{\partial r}=r\dt_j$ for some smooth and positive function $\dt_j$; for $j\in I$, as $H$ approaches $-a_j$ and $r$ approaches $0$, $\ell_j^2\ell_{j-1}\ell_{j+1}=r^2\dt_j$ for some smooth and positive function $\dt_j$. To show these, we have the following discussions:

    %To show these, we consider $r=0$, and $H$ varies in the interval corresponding to the edge $E_j$, we have the following possibilities: 

    \begin{enumerate}[(i)]
        \item When $j=1$, for $r=0$, $-a_1<H$, it's immediate to see each term of (\ref{partial ell j partial r}) is positive. This implies $\displaystyle\frac{\partial\ell_j}{\partial r}=r\dt_j$ for some function $\dt_j$ smooth and strictly positive.
        \item When $j>1$ and $j, j+1\notin I$. For $r=0$, $-a_{j}<H<-a_{j-1}$, given $1\le i<j$, we have $\det(\nu_i, \nu_j)<0, \displaystyle\frac{1}{\rho_{i-1}}-\displaystyle\frac{1}{\rho_{i}}<0$; given $j+1\le i<d$, we have $\det(\nu_i, \nu_j)>0, \displaystyle\frac{1}{\rho_{i-1}}-\displaystyle\frac{1}{\rho_{i}}>0$; given $k\in I, k<j$, we have $\det(\nu_k, \nu_j)<0, H_{k-1}<0$; given $k\in I, k>j$, we have $\det(\nu_k, \nu_j)>0, H_{k-1}>0$. Hence, each term of (\ref{partial ell j partial r}) is again positive.
        \item When $j=d$, for $r=0$, $H<-a_d$, similar arguments show each term is positive. 
        \item When $j+1\in I$, for $r=0$, $-a_j=-a_{j+1}<H<-a_{j-1}$, the only difference is when $i=j+1\in I$, we have $\rho_i=\rho_{i-1}$ and $\det(\nu_{i}, \nu_j)>0$, $H_j>0$, thus the involved terms are still positive. All other terms are still positive from the discussion in the second case.
        \item When $j\in I$. For $H=-a_j=-a_{j-1}$, $r$ doesn't take the value $0$, we let $r\rightarrow0$, then $\rho_j, \rho_{j-1}\rightarrow0$. In this case, $\displaystyle\frac{1}{\rho_{j-1}}$ goes to infinity, and we need a different argument. Using Equation~(\ref{decouple to 2 boundary condition02}), we obtain

        $$r^2=(\rho_{j-1}+H_{j-1})(\rho_{j-1}-H_{j-1})=\ell_j^2\cdot\displaystyle\frac{(2\ell_{j-1}+O(r^2))(2\ell_{j+1}+O(r^2))}{\dt_{j-1, j, j+1}^2},$$

        here $\dt_{j-1, j, j+1}$ is a smooth and positive function. Thus $\ell_j^2\ell_{j-1}\ell_{j+1}=r^2\dt_j$ for some smooth and positive function $\dt_j$.        
    \end{enumerate}
    
\end{proof}

With the preparation of these lemmas, we are ready to show the boundary condition~(\ref{global boundary behavior}) holds for the cusp edges of $P$. Consider $k\in I$, near $\ell_k(x)=0$, 

$$\xi_1=\al_{k-1}\log r+\displaystyle\frac{1}{2}(\al_{k+1}-\al_{k-1})\log(H_{k-1}+\rho_{k-1})-\displaystyle\frac{\Lambda_k}{2}\displaystyle\frac{\al_{k}}{\rho_{k-1}}+O(1),$$

$$\xi_2=\beta_{k-1}\log r+\displaystyle\frac{1}{2}(\beta_{k+1}-\beta_{k-1})\log(H_{k-1}+\rho_{k-1})-\displaystyle\frac{\Lambda_k}{2}\displaystyle\frac{\beta_{k}}{\rho_{k-1}}+O(1).$$

From Lemma~\ref{lemma boundary behavior}, we know $r^2=\dt'\ell_{k-1}\ell_k^2\ell_{k+1}$ for some smooth and strictly positive function $\dt'$ near $\ell_k$. Note from Equations~(\ref{decouple to 2 boundary condition01}) and (\ref{decouple to 2 boundary condition02}), we obtain from (\ref{three buddy}) that

\begin{multline}
    du=\displaystyle\frac{1}{2}\left(\log(\ell_{k-1})\al_{k-1}+\log(\ell_{k+1})\al_{k+1}+\left(\det(\nu_{k+1},\nu_{k-1})\log(\ell_k)-\displaystyle\frac{\Lambda_k}{\ell_{k}}\right)\al_k\right)dx_1\\
    +\displaystyle\frac{1}{2}\left(\log(\ell_{k-1})\beta_{k-1}+\log(\ell_{k+1})\beta_{k+1}+\left(\det(\nu_{k+1},\nu_{k-1})\log(\ell_k)-\displaystyle\frac{\Lambda_k}{\ell_{k}}\right)\beta_k\right)dx_2+O(1).
\end{multline}

This is the desired boundary behavior. Hence proving (\ref{global boundary behavior}). This completes the construction of the metric. The corresponding potential $u\in S_{\frac{\Lambda_k}{2}, \frac{1}{2}\det(\nu_{k-1}, \nu_{k+1})}(P, L, \ell_k)$ for $k\in I$ is immediate from the construction.
\end{proof}

\begin{rmk}
     We remark here that we can modify the assumption (\ref{non-adjacent index}) on the index set to $1<i_1\le i_2\le\cdots\le i_d<d$. If $k, k+1\in I$ and $k-1, k+2\notin I$, which means there are two adjacent cusp edges, then we have $a_{k-1}=a_k=a_{k+1}$. Now define for $i<k$, $\al'_i\coloneqq\al_i, \beta_i'\coloneqq\beta_i$; for $i=k$, $\al'_k\coloneqq\al_k+\al_{k+1}$, $\beta'_k\coloneqq\beta_k+\beta_{k+1}$; for $i>k$, $\al'_i\coloneqq\al_{i-1}, \beta_i'\coloneqq\beta_{i-1}$. Then, the situation is reduced to that discussed in the theorem. Similarly, we can extend the arguments to the case for any number of adjacent cusp edges with this argument. From this, we can relax the assumption of $D_i\cap D_j=\emptyset$ for all $i\ne j$ to allow some non-empty intersection of them.
\end{rmk}

We are interested in studying the asymptotic behavior of these Poincar\'e type scalar-flat K\"ahler metrics away from the divisor. For this, we compare them with the scalar-flat K\"ahler metrics on $\C^2$ discussed in \cite{donaldson2009constant} and \cite{weber2022generalized}. From \cite{weber2022generalized}, we know  the symplectic coordinates

$$x_1=\displaystyle\frac{1}{\sqrt{2}}(-r+\displaystyle\sqrt{H^2+r^2})+\al H^2, \quad x_2=\displaystyle\frac{1}{\sqrt{2}}(r+\displaystyle\sqrt{H^2+r^2})+\beta H^2, \text{ for } \al, \beta\ge 0$$

induce the Taub-NUT metric when $\al=\beta=0$, the generalized Taub-NUT metrics when $\al>0, \beta>0$, and the exceptional Taub-NUT metrics when $\al=0, \beta>0$ or $\beta=0, \al>0$.

\begin{thm}\label{theorem asymptotic behavior}
    Consider the metrics constructed in Theorem~\ref{Theorem unbounded PTK SFK}. The metric is (1) ALE if $\al=\beta=0$; (2) asymptotic to a generalized Taub-NUT metric if $\al, \beta>0$; (3) asymptotic to an exceptional Taub-NUT metric if either $\al=0, \beta>0$ or $\al>0, \beta=0$.
\end{thm}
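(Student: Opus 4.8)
The plan is to analyze the behavior of the symplectic potential $u$ and of the action coordinates $(x_1,x_2)$ as $(H,r)\to\infty$ in $\overline{\h}$, and to match the resulting asymptotics with the known models on $\C^2$ recalled just before the statement. The key observation is that all the terms in $\xi_1,\xi_2$ coming from the divisor edges, namely the sums $-\tfrac{1}{2}\sum_{k\in I}\tfrac{\Lambda_k\alpha_k}{\rho_{k-1}}$ and $-\tfrac{1}{2}\sum_{k\in I}\tfrac{\Lambda_k\beta_k}{\rho_{k-1}}$, decay like $O(1/\rho)$ as $\rho=\sqrt{H^2+r^2}\to\infty$, with all their derivatives decaying at least one order faster; similarly $\log(H+a_i+\rho_i)-\log(H+\rho)\to 0$ with fast-decaying derivatives. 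Hence, after a $GL(2,\Z)$ change of the angle coordinates normalizing $\nu_1=(0,1)$ and $\nu_d$ to the relevant primitive vector, the leading part of $(\xi_1,\xi_2)$ at infinity agrees with the leading part of the corresponding potential in \cite{AS12}, which in turn is precisely the one giving the Taub-NUT / generalized Taub-NUT / exceptional Taub-NUT model according to the value of $\nu=(\alpha,\beta)$. This is the same comparison principle used in \cite{AS12}: only the two unbounded edges and the parameter $\nu$ determine the end, and the compact combinatorics of $P$ (including the extra cusp contributions here) is a lower-order perturbation there.

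Concretely, I would proceed in three steps. First I would establish the decay estimates: for fixed $a_i$ and $\Lambda_k$, and for $(H,r)$ in the relevant unbounded region of $\overline{\h}$ (the "large radius" region mapping to the non-compact part of $P\backslash\ell_I$), show $\xi_j = \xi_j^{AS} + O(1/\rho)$ where $\xi_j^{AS}$ is the Abreu--Sena-Dias potential from \cite{AS12} Theorem~1.2 for the same polytope-with-edges data, with the analogous estimate holding for all derivatives of $\xi_j$ with one extra power of $\rho^{-1}$ decay; this follows by Taylor-expanding $1/\rho_{k-1}$ and $\log(H_i+\rho_i)$ around $1/\rho$ and $\log(H+\rho)$. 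Second, I would transfer these estimates through the Legendre-type correspondence: since $x=(x_1,x_2)$ is obtained by integrating $\epsilon_1,\epsilon_2$ and $u$ by integrating $\xi_1 dx_1+\xi_2 dx_2$, the $O(1/\rho)$ control on $\xi$ and its derivatives yields, after the normalization above, that the symplectic potential $u$ differs from the model symplectic potential of the relevant Taub-NUT-type metric by a function that is bounded with all derivatives bounded on the end (in the appropriate quasi-isometry sense), so that the metrics $g$ and $g_{\text{model}}$ are uniformly equivalent with matching curvature decay. Third, I would read off the trichotomy directly from the condition (\ref{conditions on nu}): $\nu=0$ forces the ALE model of \cite{AS12} (case 1); $\det(\nu,\nu_1)>0$ and $\det(\nu,\nu_d)>0$ give the generalized Taub-NUT model with $\alpha,\beta>0$ after normalization (case 2); and exactly one of these determinants vanishing puts $\nu$ on a bounding ray of the cone, giving $\alpha=0,\beta>0$ or $\alpha>0,\beta=0$, i.e.\ the exceptional Taub-NUT model (case 3). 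Here I am using that under the normalization $\nu_1=(0,1)$ and the Delzant ordering, the pair $(\det(\nu,\nu_1),-\det(\nu,\nu_d))$ is, up to positive scaling, the pair $(\alpha,\beta)$ appearing in the model coordinates $x_1=\tfrac{1}{\sqrt2}(-r+\rho)+\alpha H^2$, $x_2=\tfrac{1}{\sqrt2}(r+\rho)+\beta H^2$.

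The main obstacle I expect is the second step: turning decay of the ``harmonic data'' $\xi$ into genuine quasi-isometry of the metrics and control of their covariant derivatives, since the change of variables $(H,r)\mapsto(x_1,x_2)$ is itself only asymptotically the model one, and one must check that the error does not degenerate where $r\to 0$ along the unbounded edges (the toric fixed loci at infinity). I would handle this by working in the $(H,r)$ half-plane throughout — where the metric has the clean Gibbons--Hawking-type form with $r=(\det\Hess_x u)^{-1/2}$ from \cite{donaldson2009constant} — comparing $g$ and $g_{\text{model}}$ as Riemannian metrics on $\overline{\h}\times\mathbb{T}^2$ minus the divisor preimage, and invoking that both extend smoothly over the unbounded edges by the already-verified Guillemin-type boundary behavior (for $g$) and by \cite{AS12}/\cite{weber2022generalized} (for $g_{\text{model}}$); the uniform two-sided bound then follows from the $O(1/\rho)$ estimates and a compactness argument on the ``unit annulus'' cross-sections, and the higher-derivative bounds follow similarly. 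The volume-growth and curvature-decay claims are then inherited from the models as recalled in the introduction, so no independent computation is needed.
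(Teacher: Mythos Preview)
Your underlying observation is the same as the paper's: the cusp contributions $-\tfrac12\sum_{k\in I}\Lambda_k\nu_k/\rho_{k-1}$ decay at infinity and therefore do not affect the end model, so the asymptotics are inherited from the Abreu--Sena-Dias construction and from Weber's analysis of the exceptional case. In that sense your proposal is correct.

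However, your route through the symplectic potential $u$ and the Legendre-type change $(H,r)\mapsto(x_1,x_2)$ is more involved than what the paper actually does, and the ``main obstacle'' you anticipate in your second step is entirely avoidable. The paper never leaves the $(H,r)$ half-plane: it uses Donaldson's identity that the base metric is $r\cdot\det D\xi\,(dH^2+dr^2)$, and compares the single scalar quantity $r\det D\xi$ with its model counterpart. Since the cusp terms enter $D\xi$ only through $H_{k-1}/\rho_{k-1}^3$ and $r/\rho_{k-1}^3$, they contribute $O(\rho^{-2})$ to each entry of $D\xi$, and hence to $r\det D\xi$; the paper then just quotes the explicit leading expansions of $r\det D\xi$ already computed in \cite{AS12} Proposition~5.1 for cases (1)--(2), and for case (3) chains together $\det D\xi=\det D\xi_{AS}+O(\rho^{-2})$ with Weber's $\det D\xi_{AS}=\det D\xi_{exc}+O(\rho^{-2})$. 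No transfer through $u$, no control of the inverse map $(x_1,x_2)\mapsto(H,r)$, and no separate argument near $r=0$ along the unbounded edges is needed, because the comparison is pointwise in $(H,r)$ and both metrics are already written in the same coordinates.

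So your plan would work, but you are making step two harder than necessary: replace your Legendre-transform argument by the direct comparison of $r\det D\xi$ and you recover the paper's short proof.
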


\begin{proof}
    
From \cite{Don09}, Section 3, given angular coordinates $\theta_1, \theta_2$ for $x_1, x_2$, the metrics constructed in Theorem~\ref{Theorem unbounded PTK SFK} have the following form

$$\displaystyle\sum u_{ij}dx_i\otimes dx_j+\displaystyle\sum u^{ij}d\theta_i\otimes d\theta_j.$$

Equivalently $\displaystyle\sum u_{ij}dx_i\otimes dx_j$ can be written as

$$r\cdot \det D\xi(dH^2+dr^2).$$

For $H\ge 0$, and $\rho\coloneqq \displaystyle\sqrt{H^2+r^2}$, as $\rho\rightarrow\infty$, we have 

$$\displaystyle\frac{1}{\rho_i+H_i}-\displaystyle\frac{1}{\rho+H}=O\left(\displaystyle\frac{1}{\rho^2}\right), \displaystyle\frac{1}{\rho_i}-\displaystyle\frac{1}{\rho}=O\left(\displaystyle\frac{1}{\rho^2}\right),$$

then as $\rho\rightarrow\infty$, we rewrite $D\xi$ as 

\begin{equation}\label{det Dxi}
   D\xi=  \begin{pmatrix}
   \alpha&0\\\beta&0
\end{pmatrix}+\displaystyle\frac{1}{r}\begin{pmatrix}
    \displaystyle\frac{r\displaystyle\sum_{i=1}^{d-1}(\alpha_{i+1}-\alpha_i)}{2\rho}&\alpha_1+\displaystyle\frac{\displaystyle\sum_{i=1}^{d-1}(\alpha_{i+1}-\alpha_i)r^2}{2\rho(\rho+H)}\\\displaystyle\frac{r\displaystyle\sum_{i=1}^{d-1}(\beta_{i+1}-\beta_i)}{2\rho}&\beta_1+\displaystyle\frac{\displaystyle\sum_{i=1}^{d-1}(\beta_{i+1}-\beta_i)r^2}{2\rho(\rho+H)}
\end{pmatrix}+O\left(\displaystyle\frac{1}{\rho^2}\right).
\end{equation}

We see that compared to the Abreu and Sena-Dias case, the additional terms of the form $\displaystyle\frac{\al_k}{\rho_k}$ in $\xi$ only create terms of the form $O\left(\displaystyle\frac{1}{\rho^2}\right)$. Then under similar computations as in \cite{AS12} Proposition~5.1, we know

$$r\det D\xi=\displaystyle\frac{\det(\nu_d, \nu_1)}{2\rho}+O\left(\displaystyle\frac{1}{\rho^2}\right), \text { for }\al=\beta=0;$$

$$r\det D\xi=\det(\nu, \nu_1)\left(1-\displaystyle\frac{r^2}{2\rho(H+\rho)}\right)+\det(\nu, \nu_d)\displaystyle\frac{r^2}{2\rho(H+\rho)}+\displaystyle\frac{\det(\nu_d, \nu_1)}{2\rho}+O\left(\displaystyle\frac{1}{\rho^2}\right), \text{ for }(\al, \beta)\ne(0,0).$$

For $H\le 0$, we argue the same way by considering $(H, r)\mapsto (-H, r), (\nu_1, \cdots, \nu_d)\mapsto (\nu_d, \cdots, \nu_1)$, $(a_1, \cdots, a_d)\mapsto (-a_d, \cdots, a_1)$. Hence, we deduce the desired asymptotic behavior for the first two cases. The completeness of these metrics is immediate with the above calculations, for details, see \cite{AS12} Proposition~5.1. 

\medskip

For the last case where either $\al>0, \beta=0$ or $\al>0, \beta=0$, we compare $\det D\xi$ with that for the model exceptional Taub-NUT metrics on $\C^2$, introduced in \cite{weber2022generalized}. Let $\xi_{AS}$ be the Legendre transform of its momentum coordinate for the toric scalar-flat metric constructed in \cite{AS12} for the given polytope $P$, and $\xi_{exc}$ be the one for the exceptional Taub-NUT metric, then we know from \cite{weber2022asymptotic} Section 5 that $\det D\xi_{AS}=\det D\xi_{exc}+O(\rho^{-2})$. For our case, from the above expression (\ref{det Dxi}), we have $\det D\xi=\det D\xi_{AS}+O(\rho^{-2})$, and thus the metrics are asymptotic to the exceptional Taub-NUT metrics. 
%as in Equation (2.12) in \cite{weber2022generalized} with $k=\pm 1$. It suffices to show 
%$$\det D\xi=\det D\xi_{exc}+O(r^{-2}).$$

\end{proof}

\begin{ex}(Hwang-Singer metric, \cite{Fen24}, \cite{FYZ16})\label{HS}
    Consider the polytope with three edges whose normal vectors are $\nu_1=(0, 1), \nu_2=(1,1), \nu_3=(1, 0)$ and $I=\{2\}$. Let
    
    $$L=\{x_1=0, x_2=0, x_1+x_2-1=0\}.$$
    
\begin{figure}[h]
\begin{center}
    \begin{tikzpicture}
       \filldraw[color=violet!1, fill=violet!8, ultra thin] (0, 2) -- (0, 0.4) --(0.4, 0) -- (2, 0);
       \filldraw[color=violet!60, fill=violet!8] (0, 2) -- (0, 0.4) (0.4, 0) -- (2, 0);
       \draw[color=violet!60, dashed] (0, 0.4)-- (0.4, 0);
    \end{tikzpicture}
    \caption{The moment polytope of the Hwang-Singer metric} 
\end{center}    
 \end{figure}
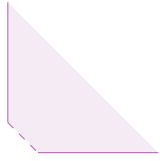 
 
 Then, from the construction, we obtain 

    $$\xi_1=\displaystyle\frac{1}{2}\log\left(H+\displaystyle\sqrt{H^2+r^2}\right)-\displaystyle\frac{1}{2}\displaystyle\frac{1}{\displaystyle\sqrt{H^2+r^2}}, \quad\xi_2=\displaystyle\frac{1}{2}\log\left(-H+\displaystyle\sqrt{H^2+r^2}\right)-\displaystyle\frac{1}{2}\displaystyle\frac{1}{\displaystyle\sqrt{H^2+r^2}};$$

    $$x_1=\displaystyle\frac{1}{2}\left(H+\displaystyle\sqrt{H^2+r^2}\right)+\displaystyle\frac{1}{2}\displaystyle\frac{H}{\displaystyle\sqrt{H^2+r^2}}, \quad x_2=\displaystyle\frac{1}{2}\left(-H+\displaystyle\sqrt{H^2+r^2}\right)-\displaystyle\frac{1}{2}\displaystyle\frac{H}{\displaystyle\sqrt{H^2+r^2}}.$$

    Then, we obtain the symplectic potential $u$, which is 

    $$u=\displaystyle\frac{1}{2}x_1\log x_1+\displaystyle\frac{1}{2}x_2\log x_2+\displaystyle\frac{1}{2}(x_1+x_2-1)\log(x_1+x_2)+h$$

    for some smooth function $h$. Let $\omega$ be the corresponding K\"ahler form. It lives on the complement of the zero section $E$ of the total space of the line bundle $\mathcal{O}(-1)$ over $\CP1$, which we denote by $Y$. Then $u\in S_{\frac{1}{2}, -\frac{1}{2}}(Y, L, E)$. Consider the momentum coordinate 

    $$\tau\coloneqq 2(x_1+x_2)=2\displaystyle\sqrt{H^2+r^2},$$

     let $X$ be the generator of the $S^1$-action satisfying $i_X\omega=-d\tau$, then from \cite{weber2023analytic} Section 2.3, we can compute the norm $\|X\|^2$ as follows:

     $$\|X\|^2=r\cdot\displaystyle\frac{\left(\displaystyle\frac{\partial\tau}{\partial H}\right)^2+\left(\displaystyle\frac{\partial\tau}{\partial r}\right)^2}{\displaystyle\frac{\partial\tau}{\partial H}\displaystyle\frac{\partial x_1}{\partial r}-\displaystyle\frac{\partial\tau}{\partial r}\displaystyle\frac{\partial x_1}{\partial H}}=\displaystyle\frac{2\tau^2}{2+\tau}.$$

     We see it is exactly the momentum profile of the Hwang-Singer metric discussed in \cite{Fen24} Section 2 and Section 3. If we vary the length of the edge corresponding to the divisor, then we obtain a one-parameter family of toric scalar-flat K\"ahler metrics of Poincar\'e type introduced in \cite{FYZ16}. More precisely, consider

     $$\xi_1=\displaystyle\frac{1}{2}\log\left(H+\displaystyle\sqrt{H^2+r^2}\right)-\displaystyle\frac{a}{2}\displaystyle\frac{1}{\displaystyle\sqrt{H^2+r^2}}, \quad\xi_2=\displaystyle\frac{1}{2}\log\left(-H+\displaystyle\sqrt{H^2+r^2}\right)-\displaystyle\frac{a}{2}\displaystyle\frac{1}{\displaystyle\sqrt{H^2+r^2}},$$

     the corresponding symplectic potential is given by 
     
     $$u=\displaystyle\frac{1}{2}x_1\log x_1+\displaystyle\frac{1}{2}x_2\log x_2+\displaystyle\frac{1}{2}(x_1+x_2-a)\log(x_1+x_2)+h_a$$
     
    for some smooth function $h_a$. Then $u\in S_{\frac{a}{2}, \frac{1}{2}}(Y, L, E)$. 
\end{ex}

With a similar approach, we can prove Theorem~\ref{theorem parallel edges}, again by explicitly constructing the toric metrics:

\begin{proof}[Proof of Theorem~\ref{theorem parallel edges}]
    Compared with the case in Theorem~\ref{Theorem unbounded PTK SFK}, all arguments of the construction work for the parallel edges case except for the choice of $\nu=(\al, \beta)$. Since $\nu_1=-\nu_d$, $\nu$ needs to satisfy

    $$\det(\nu, \nu_1)=0; \quad \det(\nu, \nu_k)\ge 0, \forall k\in I,$$

    then with the same arguments, we see $\xi_1, \xi_2$ give a one-parameter family of toric scalar-flat K\"ahler metrics. To understand the asymptotic behavior of these metrics, we compare them with that of the product metric $g_{prod}$ on $S^2\times\R^2$. On $S^2$, we take the round metric, and on $\R^2$, we take the hyperbolic metric. Then $S^2\times\R^2$ is biholomorphic to $\CP1\times D$. The symplectic coordinates of this product metric, as discussed in \cite{weber2023analytic}, can be written as

    $$dx_1=\displaystyle\frac{1}{2}\left(-1+\displaystyle\sqrt{H^2+r^2}+\displaystyle\sqrt{(H-1)^2+r^2}\right), \quad dx_2=\displaystyle\frac{1}{2}\left(1-\displaystyle\sqrt{H^2+r^2}+\displaystyle\sqrt{(H-1)^2+r^2}\right).$$   
    
    The toric scalar-flat metrics whose moment polytope has parallel unbounded edges are discussed by Weber in \cite{weber2023analytic} and \cite{weber2022generalized}. These metrics satisfy the Guillemin boundary condition. Let $\xi_{Web}$ be the Legendre coordinate of the momentum coordinate of the metric for the given polytope, and $\xi_{prod}$ be the one for the model metric. Then we know from \cite{weber2022asymptotic} Section 5 that $\det D\xi_{Web}=\det D\xi_{prod}+O(\rho^{-2})$. For our case, from the formula in Equation~(\ref{det Dxi}), we have $\det D\xi=\det D\xi_{Web}+O(\rho^{-2})$, thus conclude the asymptotic behavior of the constructed metrics.    
% Weber wrote down this metric explicitly in momentum coordinate in \cite{weber2022asymptotic} Equation (52), and write $\xi_{prod}$ as the Legendre coordinate of the momentum coordinate of this model metric.
\end{proof}

At the end of this section, we remark that for a toric scalar-flat K\"ahler metric whose symplectic potential lives in $S_{\al, \beta}(P, F)$, the $\al, \beta$ are \textit{\textbf{uniquely determined}} by the polytope. We recall the following theorem in \cite{AAS17}:

\begin{thm}(\cite{AAS17}, Proposition 4.20)
    Consider $u\in S_{\al, \beta}(P, L, F)$ satisfies

    $$-\displaystyle\sum_{i, j}\displaystyle\frac{\partial^2u^{ij}}{\partial x_i\partial x_j}=s_{(P, L, F)}$$

    for some extremal affine linear function $s_{(P, L, F)}$ of $(P, L, F)$. Then $\al, \beta$ are determined by the data $(P, L, F)$.
\end{thm}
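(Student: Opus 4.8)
The plan is to extract $\al$ and $\beta$ from the asymptotic expansion of the scalar-flat (more precisely, extremal) equation near the facet $F$, exploiting the fact that the $S_{\al,\beta}$ boundary condition pins down the singular part of $u$ in terms of $\al$, $\beta$ and the fixed combinatorial data $(P,L,F)$. Concretely, I would write $u = u_{P,F,\al,\beta} + \psi$ with $\psi$ smooth up to $F$, where $u_{P,F,\al,\beta}=\tfrac12\sum_{j\ge 2}\ell_j\log\ell_j-(\al+\beta\ell_F)\log\ell_F$ is the model potential recalled in the excerpt. The first step is to compute $\Hess u$, invert it, and read off the leading singular behavior of $u^{ij}$ as $\ell_F\to 0$: since the $-(\al+\beta\ell_F)\log\ell_F$ term contributes a $\sim -\al/\ell_F$ to the second derivative of $u$ in the $\nu_F$ direction, one finds that the relevant block of $\Hess u$ blows up like $\al/\ell_F$, hence the corresponding block of $(\Hess u)^{-1}$ vanishes like $\ell_F/\al$. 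Plugging this into $-\sum_{i,j}\partial^2_{x_ix_j} u^{ij} = s_{(P,L,F)}$ and matching the coefficients of the distinguished terms in the expansion in powers of $\ell_F$ (for instance the coefficient of $\ell_F^{-1}$, or the constant term, evaluated by restricting to the edge $F$) gives a system of equations relating $\al$, $\beta$, the normals $\nu_j$, the affine data $\lambda_j$, and the extremal function $s_{(P,L,F)}$ — all of which except $\al,\beta$ are determined by $(P,L,F)$.

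The second step is to show this system actually determines $\al$ and $\beta$ uniquely, not just constrains them. Here I would argue that $\al$ is fixed by a leading-order (residue-type) identity — morally the coefficient of the most singular term in the scalar curvature expansion transverse to $F$ — and then, with $\al$ known, $\beta$ is fixed by the next-order term. The extremal function $s_{(P,L,F)}$ itself is determined by $(P,L,F)$ via the usual Abreu–Guillemin characterization (it is the $L^2$-projection, against the measure with zero mass on $F$, of the constant $0$ onto affine-linear functions, which for the scalar-flat case forces $s_{(P,L,F)}\equiv$ a specific affine function), so it contributes no new unknowns. An alternative, cleaner route: use the first-order boundary conditions equivalent to $S_{\al,\beta}(P,L,F)$ (the analogue of \cite{AAS17} Definition~4.6/Proposition~4.19) to express the transverse behavior of $g$ near $F$ directly in terms of $\al,\beta$, then show that the scalar-flat PDE forces the ODE governing the transverse profile to have a unique solution with the prescribed Poincaré-type decay, and that this solution exists only for one pair $(\al,\beta)$.

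I expect the main obstacle to be making the matching of asymptotic coefficients rigorous: one must be careful that the smooth remainder $\psi$ (and its derivatives) genuinely does not contribute to the orders of $\ell_F$ being matched, which requires controlling cross-terms in the expansion of $(\Hess(u_{P,F,\al,\beta}+\psi))^{-1}$ and of $\partial^2_{x_ix_j}$ applied to it. A convenient device is to restrict everything to the edge $F$ (where $\ell_F=0$) after multiplying by the appropriate power of $\ell_F$, turning the identities into honest equations on a one-dimensional affine space, so that dimension counting ($2$ unknowns, at least $2$ independent linear conditions coming from two consecutive orders) closes the argument. I would also need the non-degeneracy of the linear system, which should follow from the Delzant/basis condition on the normals adjacent to $F$ together with $\al>0$; this is the one place where a short explicit computation with $\nu_{k-1},\nu_F,\nu_{k+1}$ seems unavoidable.
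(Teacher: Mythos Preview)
The paper does not actually prove this statement; it is quoted verbatim as Proposition~4.20 of \cite{AAS17}, and the paper only adds the one-sentence remark that ``an explicit expression of $\al$ and $\beta$ are obtained in Equations~(46) and (48)'' of \cite{AAS17}, and that for $s_{(P,L,F)}=0$ these expressions involve only data on $F$, so they remain valid when $P$ is non-compact. There is therefore no in-paper proof to compare your proposal against.

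That said, your strategy---expand $(\Hess u)^{-1}$ near $F$ using $u=u_{P,F,\al,\beta}+\psi$ with $\psi$ smooth, feed it into the Abreu equation, and match the leading and next-to-leading coefficients in $\ell_F$---is exactly the mechanism that produces the explicit formulas the paper alludes to. The original argument in \cite{AAS17} is more streamlined than a full asymptotic matching: it uses the first-order boundary characterization of $S_{\al,\beta}(P,L,F)$ (their Proposition~4.19) to write down directly the restriction to $F$ of $u^{ij}$ and its first normal derivative, then integrates the Abreu equation over $F$ (and against an affine function on $F$) to obtain two linear equations for $\al$ and $\beta$ whose coefficients are integrals over $F$ of purely combinatorial quantities. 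So your ``alternative, cleaner route'' via the first-order boundary conditions is in fact closer to the cited proof than your primary expansion route; the integration-over-$F$ step is what replaces your proposed coefficient matching and sidesteps the issue you flag about controlling cross-terms from $\psi$. Your concern about non-degeneracy of the resulting $2\times 2$ system is legitimate, and in \cite{AAS17} it is handled by the positivity of $\al$ together with the Delzant condition on the normals adjacent to $F$, just as you anticipate.
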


Following the proof of this theorem, an explicit expression of $\al$ and $\beta$ are obtained in Equations~(46) and (48). For our case where $s_{(P, L, F)}=0$, we see $\al, \beta$ are expressed in terms of functions on $F$, thus for non-compact $P$, they are still determined by the data $(P, L, F)$. Hence, for a fixed polytope $P$, if the symplectic potential of the toric scalar-flat K\"ahler metric lives in $S_{\al, \beta}$, then the choices of $\al, \beta$ must agree with those in Theorem~\ref{Theorem unbounded PTK SFK}. In the Appendix~\ref{Section}, we will discuss a uniqueness result under this prescribed class of symplectic potential.

%More precisely, we still have the expressions of $\al$ and $\beta$ given in Equation~(46) and Equation~(50) respectively. 

\section{A conical family of toric metrics}\label{Section construction of conic}

Given a cone angle $2\pi\theta_0$ for $\theta_0\in(0, 1)$, motivated by the conical family in \cite{gauduchon2019invariant} Remark 1.2, we consider the following boundary behavior of potential $u$ of a toric metric which has conical singularity along the divisor corresponding to the edge $\ell(x)=0$ on its moment polytope:

\begin{equation}\label{boundary behavior of conical family}
    u(x)=\displaystyle\frac{1}{2\theta_0}\cdot\ell(x)\log\ell(x)+h_0(x)
\end{equation}

for some smooth function $h_0$. For the ALE scalar-flat K\"ahler metric of Poincar\'e type constructed in Theorem~\ref{Theorem unbounded PTK SFK}(i.e., $\nu=0$) whose moment polytope is $P\backslash\ell_I$. Let $u$ be its symplectic potential, write 

$$u=\displaystyle\frac{1}{2}\displaystyle\sum_{i\notin I}\ell_i\log\ell_i+\displaystyle\frac{1}{2}\displaystyle\sum_{i\in I}(\al_i+\beta_i\ell_i)\log\ell_i+h$$ 

for some $h\in C^{\infty}(P)$. Here $\al_i=-\displaystyle\frac{1}{2}\Lambda_i, \beta_i=\displaystyle\frac{1}{2}\det(\nu_{i+1}, \nu_{i-1})$ for $i\in I$. Consider the smooth scalar-flat K\"ahler metric on $P$ constructed by Abreu and Sena-Dias in \cite{AS12}, let $u_{AS}$ be the symplectic potential, we can write

$$u_{AS}=\displaystyle\frac{1}{2}\displaystyle\sum_{i=1}^{d-1}\ell_i\log\ell_i+h_{AS}$$ 

for some $h_{AS}\in C^{\infty}(P)$. For any $\theta=(\theta_{i_1}, \cdots, \theta_{i_m})$, consider
 
 \begin{equation}
 	u^{(\theta)}\coloneqq\displaystyle\frac{1}{2}\displaystyle\sum_{i\notin I}\ell_i\log\ell_i+\displaystyle\frac{1}{2}\displaystyle\sum_{i\in I}\displaystyle\frac{1}{\theta_i}\cdot\ell_i\log\ell_i+v_{\theta}+\displaystyle\prod_{i\in I}(1-\theta_i)\cdot h+\displaystyle\prod_{i\in I}\theta_i\cdot h_{AS}.
 \end{equation}

 %  \displaystyle\frac{1}{2}\displaystyle\sum_{i\in I}\left(1-\displaystyle\frac{1}{\theta_i}\right)\ell_i\log(\ell_i+\theta_i)\\+\displaystyle\frac{1}{2}\al_k(\log(\ell_k+\theta_k)-\theta_k\log(\ell_k+1))+\displaystyle\frac{1}{2}\displaystyle\sum_{i\in I, i\ne k}\al_i\log(\ell_i+\theta_i)+\theta_kh+(1-\theta_k)h_{AS}

 Here $v_{\theta}=0$ for $\theta=1$ and for $\theta\in(0, 1)$, 

 $$v_{\theta}\coloneqq\displaystyle\sum_{i\in I}\Big[\left(\beta_i-\displaystyle\frac{1}{2\theta_i}\right)\ell_i\log\left(\ell_i+\displaystyle\frac{\theta_i}{1-\theta_i}\right)+\al_i\log\left(\ell_i+\displaystyle\frac{\theta_i}{1-\theta_i}\right)+\left(\beta_i-\displaystyle\frac{1}{2}\right)\ell_i\log\displaystyle\frac{1}{1-\theta_i}-\al_i\log\displaystyle\frac{1}{1-\theta_i}\Big].$$

Then $u^{(\theta)}\rightarrow u$ as $\theta_i\rightarrow0$ for all $i$ and $u^{(\theta)}\rightarrow u_{AS}$ as $\theta_i\rightarrow 1$ for all $i$. This family of conical metrics, however, are not necessarily scalar-flat. 

\medskip

In the remaining part of this section, we will follow the framework of Abreu and Sena-Dias to explicitly construct toric conical scalar-flat K\"ahler metrics:

\begin{thm} \label{Theorem unbounded conical SFK}

Given $X$ satisfying the same conditions as in Theorem~\ref{Theorem unbounded PTK SFK} and let $P$ be the moment polytope for $X$. Consider an index set $J=\{j_1, \cdots j_m\}\subset\{1, 2, \cdots, d\}$ with $1\le j_1<\cdots<j_m\le d$. For each $j_i\in J$, fix a cone angle $2\pi\theta_{j_i}$, we write $\theta=(\theta_{j_1}, \cdots \theta_{j_m})$. For the normal vector $\nu_i=(\al_i, \beta_i)$, consider

\begin{equation}\label{modified coefficients for conical family}
    \al_i'\coloneqq\al_i\text{ if }i\notin J, \al_i'\coloneqq\displaystyle\frac{\al_i}{\theta_i} \text{ if }i\in J; \quad 
    \beta_i'\coloneqq\beta_i\text{ if }i\notin J, \beta_i'\coloneqq\displaystyle\frac{\beta_i}{\theta_i}\text{ if }i\in J.
\end{equation}

Let $\ell_{J}=\bigcup_{i=1}^m\ell_{j_i}$ be the union of edges $\ell_{j_i}$ indexed by the elements in $J$, here $\ell_{j_i}$ corresponds to $D_{i}, \forall i=1, \cdots m$. Consider $\nu=(\alpha, \beta)$ a vector in $\mathbb{R}^2$ satisfying (\ref{conditions on nu}). Set

$$\xi_1\coloneqq\alpha'_1\log r+\displaystyle\frac{1}{2}\displaystyle\sum_{i=1}^{d-1}(\alpha'_{i+1}-\alpha'_i)\log\left(H+a_i^{(\theta)}+\displaystyle\sqrt{(H+a_i^{(\theta)})^2+r^2}\right)+\alpha H,$$

$$\xi_2\coloneqq\beta'_1\log r+\displaystyle\frac{1}{2}\displaystyle\sum_{i=1}^{d-1}(\beta'_{i+1}-\beta'_i)\log\left(H+a_i^{(\theta)}+\displaystyle\sqrt{(H+a_i^{(\theta)})^2+r^2}\right)+\beta H,$$

where $a_1^{(\theta)}, \cdots, a_{d-1}^{(\theta)}$ are real numbers determined by $P$ and $\theta$. Let $x_1, x_2$ be the primitives of 

$$\epsilon_1=r\left(\displaystyle\frac{\partial\xi_2}{\partial r}dH-\displaystyle\frac{\partial\xi_2}{\partial H}dr\right), \quad \epsilon_2=-r\left(\displaystyle\frac{\partial\xi_1}{\partial r}dH-\displaystyle\frac{\partial\xi_1}{\partial H}dr\right).$$

Then they define the momentum action coordinates on $P^{\circ}$ of some conical toric scalar-flat K\"ahler metric on $X$ whose cone angle along $D_i$ is $2\pi\theta_{j_i}$. Its symplectic potential satisfies

$$du=\xi_1dx_1+\xi_2dx_2.$$ 
\end{thm}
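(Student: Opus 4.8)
The plan is to mirror the proof of Theorem~\ref{Theorem unbounded PTK SFK} almost verbatim, treating the conical case as a rescaling of the smooth (Abreu--Sena-Dias) construction rather than a genuinely new computation. The starting observation is that the functions $\xi_1,\xi_2$ here are exactly of the form used in \cite{AS12}, but with the primitive normals $\nu_i=(\al_i,\beta_i)$ replaced by the rescaled vectors $\nu_i'=(\al_i',\beta_i')$ from \eqref{modified coefficients for conical family}, and with shifted parameters $a_i^{(\theta)}$. So the first step is to verify the hypothesis \eqref{condition local model} of Theorem~\ref{theorem local model}, i.e. $\det D\xi>0$. I would compute $D\xi$ exactly as in \eqref{determinant rewritten} (there is now no $\Lambda_k$-correction term, so the matrix is literally the Abreu--Sena-Dias matrix \eqref{AS's matrix} with $\nu_i\rightsquigarrow\nu_i'$, plus the $\nu$-term \eqref{component for nu}); positivity of the core part then follows because rescaling a consecutive pair of normals by positive scalars $1/\theta_i,1/\theta_{i+1}$ preserves the sign of $\det(\nu_i',\nu_{i+1}')$, and the telescoping-sum positivity argument of \cite{AS12} Theorem~4.1 only uses those signs together with $r,\rho_i>0$. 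The $\nu$-contribution is handled exactly as in the proof of Theorem~\ref{Theorem unbounded PTK SFK} using \eqref{conditions on nu}.

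The second step is to fix the constants $a_i^{(\theta)}$ and show $x=(x_1,x_2)$ is a proper homeomorphism $\overline{\h}\to P$, smooth diffeomorphism on the interiors, with the correct boundary behavior. As in the earlier proof, along the $H$-axis $x$ is piecewise affine with slope governed by the $\nu_i'$; demanding that the $j$-th vertex of the broken line land on the vertex $\ell_j\cap\ell_{j+1}$ of $P$ gives linear equations for $a_i^{(\theta)}-a_{i-1}^{(\theta)}$ of the same shape as \eqref{aj}, now with $\nu_i'$ in place of $\nu_i$. These have a unique solution determined by $P$ and $\theta$ (this defines the $a_i^{(\theta)}$), and one reads off, exactly as in \eqref{aj}--\eqref{p1q1condtion1}, that near the edge $\ell_{j_i}$ the potential $u$ satisfies, modulo smooth, $u=\tfrac12\sum_{i\notin J}\ell_i\log\ell_i+\tfrac1{2\theta_{j_i}}\sum_{i\in J}\ell_{j_i}\log\ell_{j_i}$; comparing with \eqref{boundary behavior of conical family} this is precisely the Guillemin-type condition for a cone angle $2\pi\theta_{j_i}$ along $D_i$. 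Finally, one invokes Lemma~\ref{lemma boundary behavior}'s argument (or rather its non-cusp analogue, which here is the only case that occurs) to see $\det(\Hess_x u)=(\dt\prod_i\ell_i^{c_i})^{-1}$ with exponents matching the conical Guillemin condition, so the metric \eqref{Kahler metric potential} extends to a toric K\"ahler metric on $X$ with the prescribed conical singularities along $\ell_J$; scalar-flatness is automatic from Theorem~\ref{theorem local model} since $\xi_1,\xi_2$ solve \eqref{axi-symmetric harmonic function}. That the complex structure is the expected one (and that the end behavior is as stated in Theorem~\ref{Theorem unbounded conical SFK}'s statement) follows from the asymptotic analysis of Theorem~\ref{theorem asymptotic behavior}, since as $\rho\to\infty$ the rescaled normals only perturb $\det D\xi$ by $O(\rho^{-2})$-controlled terms and the leading behavior is unchanged.

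The main obstacle, and the one place requiring real care rather than bookkeeping, is the boundary/Delzant analysis at the conical edges: the rescaled $\nu_i'$ are no longer primitive integral vectors and no longer satisfy $\det(\nu_{i-1}',\nu_i')=-1$, so one cannot cite the Abreu--Sena-Dias vertex computation as a black box and must instead check directly that the linear system for $a_i^{(\theta)}$ is solvable and yields exactly the boundary behavior \eqref{boundary behavior of conical family}. Concretely, the subtle point is the interaction of a conical edge $\ell_{j_i}$ with its two neighbors $\ell_{j_i-1},\ell_{j_i+1}$: one needs the analogue of the identity \eqref{three buddy}, i.e. that $\det(\nu_{j_i+1}',\nu_{j_i-1}')$ and the ratios $\al'_{j_i\pm1}/\al'_{j_i}$ still produce a consistent piecewise-linear image whose defining inequalities cut out $P$ (with unchanged edges away from $J$), and that no spurious vertex is created. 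I would isolate this as a lemma computing $\det(\nu'_{k-1},\nu'_{k+1})$ in terms of $\theta_k$ and the original data, and then the rest of the verification of \eqref{boundary behavior of conical family} and of the determinant factorization goes through exactly as in the proof of Theorem~\ref{Theorem unbounded PTK SFK}. Everything else — closedness of $\epsilon_1,\epsilon_2$ and of $\xi_1dx_1+\xi_2dx_2$, existence of the potential $u$, scalar-flatness — is inherited verbatim from Theorem~\ref{theorem local model}.
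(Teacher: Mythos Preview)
Your proposal is correct and follows the same overall strategy as the paper's proof: positivity of $\det D\xi$ via the sign-preservation under positive rescaling $\nu_i\mapsto\nu_i'$, determination of the constants $a_i^{(\theta)}$, verification that $x$ is a proper homeomorphism onto $P$, and the boundary check $du=\nu_j'\log\ell_j+O(1)$ near each edge.

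However, you overcomplicate the ``main obstacle''. In the conical case no edge collapses to a point, so the delicate analysis around \eqref{three buddy} and \eqref{decouple to 2 boundary condition02} from the Poincar\'e-type proof is simply absent: each conical edge $\ell_{j_i}$ remains a genuine edge, and the boundary behavior follows directly from $\xi=\nu_j'\log r+O(1)$ together with the claim $r^2=\dt\prod_i\ell_i$ (exponents all equal to $1$, exactly as in the smooth case---the conical factor $1/\theta_{j_i}$ lives in $\nu_j'$, not in the determinant factorization). No lemma on $\det(\nu_{k-1}',\nu_{k+1}')$ is needed. The paper's one genuine addition to the bookkeeping is a short geometric lemma computing $a_j^{(\theta)}-a_{j-1}^{(\theta)}=\theta_j L_j/(2\pi|\nu_j|^2)$ by integrating $\omega|_{\ell_j}$, which replaces your proposed algebraic solution of the vertex linear system; both routes give the same relations \eqref{aj aj theta relation} between $a_j^{(\theta)}$ and $a_j'$, after which $\ell_j(x)=0$ on the correct $H$-interval is a direct check.
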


\begin{proof}
    Firstly, the positivity of $\det D\xi$ can be proved with the same arguments as in Theorem~\ref{Theorem unbounded PTK SFK}, it is because $(\al_i', \beta_i')$ either equals to $(\al_i, \beta_i)$, or is rescaled by a positive constant $\displaystyle\frac{1}{\theta_i}$ from $(\al_i, \beta_i)$ along edges corresponding to conical divisors. Next, to check the boundary behavior of $u$, we note for $-a_j^{(\theta)}<H<-a_{j-1}^{(\theta)}$, we have

$$\xi_1=\al'_1\log r+\displaystyle\sum_{i=1}^{j-2}(\al'_{i+1}-\al'_i)\log r=\al'_j\log r+O(1), \quad \xi_2=\beta_j'\log r+O(1),$$

which gives

$$du=\log r(\al'_jdx_1+\beta'_jdx_2)+O(1).$$

We claim that we still have $r=(\dt\prod_{i=1}^d\ell_i)^{1/2}$ for some smooth and positive function $\dt$. The reason is that with a similar analysis as in Lemma~\ref{lemma boundary behavior}, we obtain 

$$\displaystyle\frac{\partial x}{\partial r}=-\displaystyle\frac{r}{2}\displaystyle\sum_{i=1}^{d-1}\displaystyle\frac{\beta'_{i+1}-\beta'_i}{\rho_i^{(\theta)}}, \quad \displaystyle\frac{\partial\ell_j}{\partial r}=\displaystyle\frac{r}{2}\displaystyle\sum_{i=1}^{d-1}\displaystyle\frac{\det(\nu'_{i+1}-\nu'_i, \nu_j)}{\rho_i^{(\theta)}},$$

here $\nu'_i\coloneqq(\al'_i, \beta'_i)$, $H_i^{(\theta)}\coloneqq H+a_i^{(\theta)}$ and $\rho_i^{(\theta)}\coloneqq\displaystyle\sqrt{(H_i^{(\theta)})^2+r^2}$. Again, since $\al_i', \beta_i'$ are rescaled from $\al_i, \beta_i$, we still have

$$\displaystyle\frac{r}{2}\left(-\displaystyle\frac{\det(\nu_1', \nu_j)}{\rho_1^{(\theta)}}+\displaystyle\sum_{i=2}^{d-1}\det(\nu_i', \nu_j)\left(\displaystyle\frac{1}{\rho_{i-1}^{(\theta)}}-\displaystyle\frac{1}{\rho_{i}^{(\theta)}}\right)+\displaystyle\frac{\det(\nu_d', \nu_j)}{\rho_{d-1}^{(\theta)}}\right)>0,$$

 and this proves the claim. Thus, $du=\log\ell_j(\al'_jdx_1+\beta'_jdx_2)+O(1)$, giving the desired boundary behavior.

\begin{lem}
    Given $j\in J$, 

    \begin{equation}\label{length of conical edge}
        a_{j}^{(\theta)}-a_{j-1}^{(\theta)}=\theta_j\cdot\displaystyle\frac{L_{j}}{2\pi|\nu_{j}|^2}.   
    \end{equation}
\end{lem}

\begin{proof}
    Note

    $$\omega|_{\ell_j}=r\displaystyle\frac{\partial\xi_2}{\partial r}dH\wedge d\theta_1-r\displaystyle\frac{\partial\xi_1}{\partial r}dH\wedge d\theta_2,$$

    then from $\xi=\displaystyle\frac{1}{\theta_j}\nu_j\log r+O(1)$ on $\ell_j$, we get $\omega|_{\ell_j}=\displaystyle\frac{|\nu_j|^2}{\theta_j}dH\wedge dt.$ Then 

    $$L_j=\displaystyle\int_{\ell_j}\omega=2\pi(a_j^{(\theta)}-a_{j-1}^{(\theta)})\cdot\displaystyle\frac{|\nu_j|^2}{\theta_j},$$

    concluding (\ref{length of conical edge}).
\end{proof}

Also we know for $j\notin J$, $a_{j}^{(\theta)}-a_{j-1}^{(\theta)}=a_{j}'-a_{j-1}'$. Then, we obtain the relation between $a_j^{(\theta)}$ and $a_j'$:

\begin{multline}\label{aj aj theta relation}
    a_{j}^{(\theta)}=a_{j}' \text{ if }j<j_1; \quad a_{j}^{(\theta)}=a_{j}'+\displaystyle\sum_{\ell=1}^k(1-\theta_{j_{\ell}})(a_{j_{\ell}-1}'-a_{j_{\ell}}')\text { if } j_k\le j<j_{k+1}; \\\text{and }a_{j}^{(\theta)}=a_{j}'+\displaystyle\sum_{\ell=1}^m(1-\theta_{j_{\ell}})(a_{j_{\ell}-1}'-a_{j_{\ell}}')\text { if } j_{m}\le j\le d.
\end{multline}

From the expression of $\xi_1, \xi_2$ we deduce the expression of $x_1, x_2$ as follows:

$$x_1=\beta_1'H+\displaystyle\frac{1}{2}\displaystyle\sum_{i=1}^{d-1}(\beta_{i+1}'-\beta_i')(H_i^{(\theta)}-\rho_i^{(\theta)}), \quad x_2=-\al_1'H-\displaystyle\frac{1}{2}\displaystyle\sum_{i=1}^{d-1}(\al_{i+1}'-\al_i')(H_i^{(\theta)}-\rho_i^{(\theta)}).$$

When $r=0$, we have

$$x_1=\beta_1'H+\displaystyle\frac{1}{2}\displaystyle\sum_{i=1}^{d-1}(\beta_{i+1}'-\beta_i')(H_i^{(\theta)}-|H_i^{(\theta)}|), \quad x_2=-\alpha_1'H-\displaystyle\frac{1}{2}\displaystyle\sum_{i=1}^{d-1}(\al_{i+1}'-\al_i')(H_i^{(\theta)}-|H_i^{(\theta)}|).$$

Equivalently, we know

\begin{enumerate}[(i)]
    \item If $-a_1^{(\theta)}<H$, then $x_1=\beta_1'H, \quad x_2=-\al_1'H$;
    \item if $-a_{j+1}^{(\theta)}<H<-a_j^{(\theta)}$, then $x_1=\beta_{j+1}'H+\displaystyle\sum_{i=1}^ja_i^{(\theta)}(\beta_{i+1}'-\beta_i'), \quad x_2=-\al_{i+1}'H-\displaystyle\sum_{i=1}^ja_i^{(\theta)}(\al_{i+1}'-\al_i')$;
    \item if $H<-a_{d-1}^{(\theta)}$, then $x_1=\beta_d'H-\displaystyle\sum_{i=1}^{d-1}a_i^{(\theta)}(\beta_{i+1}'-\beta_i'), \quad x_2=-\al_d'H-\displaystyle\sum_{i=1}^{d-1}a_i^{(\theta)}(\al_{i+1}'-\al_i').$
\end{enumerate}

We want to show $x'=(x_1', x_2')$, when restricted to $(H, 0)$, defines a global proper homeomorphism. Again, we first focus on its behavior along $\ell_j$ for $j_1\le j<j_2$. For simplicity, we write $k=j_1$, then

$$x_1=\beta_{k-1}'H+\displaystyle\frac{1}{2}(\beta_k'-\beta_{k-1}')(H_{k-1}^{(\theta)}-\rho_{k-1}^{(\theta)})+\displaystyle\sum_{i=1}^{k-2}a_i^{(\theta)}(\beta_{i+1}'-\beta_i')+O(r^2),$$

$$x_2=-\al_{k-1}'H-\displaystyle\frac{1}{2}(\al_k'-\al_{k-1}')(H_{k-1}^{(\theta)}-\rho_{k-1}^{(\theta)})-\displaystyle\sum_{i=1}^{k-2}a_i^{(\theta)}(\al_{i+1}'-\al_i')+O(r^2).$$

From (\ref{aj aj theta relation}), we know $\ell_{k}(x)=0$ holds. For $k<j<j_2$, note $\ell_j(x)=0$ is equivalent to 

$$\displaystyle\sum_{i=2}^{j-1}\det(\nu_i, \nu_j)(a_{i-1}^{(\theta)}-a_i^{(\theta)})-a_1^{(\theta)}\det(\nu_1, \nu_j)=\displaystyle\sum_{i=2}^{j-1}\det(\nu_i', \nu_j)(a_{i-1}'-a_i')-a_1'\det(\nu_1, \nu_j).$$

From (\ref{aj aj theta relation}), direct computation shows the above equation holds. With essentially the same arguments applied to $j_i\le j<j_{i+1}$ for $i\ge 2$, we see $\ell_j(x)=0$ holds. Hence $x$ defines a global proper homeomorphism as desired.
\end{proof}

For the asymptotic behavior of the conical metrics, note as $\rho\rightarrow\infty$,

$$r\det D\xi=\displaystyle\frac{\det(\nu_d', \nu_1')}{2\rho}+O\left(\displaystyle\frac{1}{\rho^2}\right), \text { for }\al=\beta=0;$$

$$r\det D\xi=\det(\nu, \nu_1')\left(1-\displaystyle\frac{r^2}{2\rho(H+\rho)}\right)+\det(\nu, \nu_d')\displaystyle\frac{r^2}{2\rho(H+\rho)}+\displaystyle\frac{\det(\nu_d', \nu_1')}{2\rho}+O\left(\displaystyle\frac{1}{\rho^2}\right), \text { for }(\al, \beta)\ne(0,0).$$

The arguments in Theorem~\ref{theorem asymptotic behavior} still work here, and hence the asymptotic behavior of conical metrics coincide with those for the cuspidal metrics with a given choice of $(\al, \beta)$.

\section{Appendix: Uniqueness of toric metrics under given boundary conditions}\label{Section}

\begin{thm}\label{theorem uniqueness at the end}
    Consider the same setting as in Theorem~\ref{Theorem unbounded PTK SFK}. Assume $g$ is a toric scalar-flat K\"ahler metric of Poincar\'e type and its symplectic potential $u$ satisfies the prescribed boundary behavior given in (\ref{target boundary behavior}), then $g$ can only be one of the metrics constructed in Theorem~\ref{Theorem unbounded PTK SFK}.
\end{thm}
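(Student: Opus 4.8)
The plan is to pass, via the inverse of Donaldson's construction, from the metric $g$ to a pair of axisymmetric harmonic functions, and then to show that the prescribed boundary behavior pins this pair down up to the two parameters already present in Theorem~\ref{Theorem unbounded PTK SFK}. Concretely, since $g$ is toric and scalar-flat, I would set $r=(\Hess_x u)$-determinant$^{-1/2}$, i.e. $r=(\det\Hess_x u)^{-1/2}$, and let $H$ be the conjugate function of \cite{Don09} (canonical up to an additive constant and the reflection $H\mapsto -H$, which we normalize away using the order of the edges of $P$); together with Theorem~\ref{theorem local model} this produces on $\h$ a pair $\xi=(\xi_1,\xi_2)$ solving (\ref{axi-symmetric harmonic function}) with $\det D\xi>0$, a diffeomorphism $x=(x_1,x_2)\colon\h\to P^\circ$ whose components are the primitives of the closed $1$-forms $\epsilon_1,\epsilon_2$ built from $\xi$, and the identity $du=\xi_1\,dx_1+\xi_2\,dx_2$. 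Thus $g$ is encoded by $\xi$, and it is enough to show $\xi$ is one of the maps exhibited in Theorem~\ref{Theorem unbounded PTK SFK}.

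Next I would read off the singular and asymptotic data of $\xi$ along $\partial\h=\{r=0\}$ from the hypotheses on $u$. The symplectic polytope of $(X\setminus D,\omega)$ is the fixed polytope $P\setminus\ell_I$, so $x$ maps $\partial\h$ onto $\partial P\setminus\ell_I$: the $H$-axis splits into the intervals of the non-cusp edges $\ell_i$, $i\notin I$, separated by the points corresponding to the cusp edges $\ell_k$, $k\in I$. On the interval of $\ell_i$ one has $\ell_i(x)\asymp r^2$, so the Guillemin part of $u$ forces $\xi=\nu_i\log r+(\text{smooth and even in }r)$ there. Near a cusp edge, the hypothesis $u\in S_{\Lambda_k/2,\,\frac12\det(\nu_{k-1},\nu_{k+1})}(P,L,\ell_k)$ together with the computations behind (\ref{decouple to 2 boundary condition01}) and (\ref{decouple to 2 boundary condition02}) forces the $\ell_k$-interval to collapse to a single point $H=-a_k$, at which $\xi$ acquires the singular part $-\tfrac{1}{2}\Lambda_k\nu_k\,\rho_{k-1}^{-1}$ together with the logarithmic boundary terms of the adjacent edges $\ell_{k-1},\ell_{k+1}$. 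Moreover the break points $-a_i$ are not free: running the relation between $\xi$, $\epsilon_i$, and $x_i$ backwards and requiring that the image of $x$ be the given polytope reproduces exactly (\ref{aj}) (equivalently $a_i-a_{i-1}=L_i/(2\pi|\nu_i|^2)$ for $i\notin I$ and $a_k=a_{k-1}$ for $k\in I$), so the $a_i$ and the coefficients $\Lambda_k=a_k'-a_{k-1}'$ are determined by $(P,L,I)$ up to the already-fixed translation. Finally, letting $\rho\to\infty$ and using $\det D\xi>0$, the asymptotics of $\xi$ must be $\nu H+(\text{the fixed finite-charge terms})+o(1)$ for some $\nu=(\alpha,\beta)$, which is then necessarily in the cone (\ref{conditions on nu}).

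For the conclusion I would run a Liouville argument. Let $\xi'$ be the axisymmetric harmonic map of the metric produced by Theorem~\ref{Theorem unbounded PTK SFK} for the value of $\nu$ extracted from $g$. By the preceding step $\xi$ and $\xi'$ have the same normals $\nu_i$, the same $\Lambda_k$, the same break points $a_i$, and the same $\nu$, so $\eta:=\xi-\xi'$ solves (\ref{axi-symmetric harmonic function}), is bounded near the whole $H$-axis (all $\log r$ and $\rho_{k-1}^{-1}$ singular parts cancel), and satisfies $\eta=o(1)$ as $\rho\to\infty$. Regarding $\eta$ as an axisymmetric harmonic function on $\R^3$ whose axis of symmetry is the $H$-axis, its boundedness removes that codimension-two singular set, so $\eta$ extends to a bounded harmonic function on all of $\R^3$ and is therefore constant; the decay at infinity then gives $\eta\equiv 0$ (and in any case adding a constant vector to $\xi$ only changes $u$ by an affine function and leaves $g$ unchanged). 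Hence $du=du'$, so $u-u'$ is affine, $\Hess_x u=\Hess_x u'$, and $g$ agrees with the metric of Theorem~\ref{Theorem unbounded PTK SFK} attached to $\nu$.

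The main obstacle is the middle step: translating the purely local boundary prescription (\ref{target boundary behavior}) into a rigid global description of $\xi$ — an axisymmetric harmonic map singular only at the specified points of $\partial\h$, with the specified singular coefficients and break points, and with freedom only in the linear term $\nu H$ at infinity. This requires re-deriving, in reverse, the matching identities used in the proof of Theorem~\ref{Theorem unbounded PTK SFK} (the relations among the $a_i$, the $\Lambda_k$, and the defining data of $P$) together with a careful asymptotic analysis near $\partial\h$ and near $\rho=\infty$. Once this dictionary is established, the harmonic-function uniqueness in the last step is routine.
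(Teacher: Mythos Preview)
Your outline has the right overall architecture (Donaldson's inverse construction, then a Liouville-type argument), but there is a genuine gap at infinity that your plan does not close, and the paper's proof is organized precisely to avoid it.

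The problematic step is your extraction of $\nu$ and the claim that $\eta=\xi-\xi'$ satisfies $\eta=o(1)$ as $\rho\to\infty$. The hypothesis (\ref{target boundary behavior}) only prescribes the behavior of $u$ near the edges of $P$, i.e.\ near the finite part of $\partial\h$; it says nothing about what happens as $\rho\to\infty$. The condition $\det D\xi>0$ is far too weak to force $\xi=\nu H+(\text{finite charges})+o(1)$: an axisymmetric harmonic function on $\h$ can grow arbitrarily fast at infinity, and a priori $g$ could have quite wild asymptotics on the noncompact end. So you cannot first read off $\nu$ and then run Liouville in $\R^3$; controlling the behavior at infinity \emph{is} the content of the theorem. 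A second, smaller gap is that you take for granted that $(H,r)$ gives a global diffeomorphism $\h\to P^\circ$; Donaldson's construction is local, and the paper spends a full lemma (using uniformization and a Schwarz-reflection argument, following \cite{SD21}) to prove that $z=H+ir\colon P^\circ\to\h$ is a bijection.

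The paper circumvents the infinity issue by comparing not the $\xi$'s but the moment coordinates: set $\mu=z^{-1}$ and $\mu_{ALE}$ the inverse for the $\nu=0$ metric of Theorem~\ref{Theorem unbounded PTK SFK}, and study $\mu_0=\mu-\mu_{ALE}$. The point is that $\mu$ takes values in $P$, so $\ell_1(\mu)\ge 0$ and $\ell_d(\mu)\ge 0$ give \emph{one-sided} bounds for free, while $\mu_{ALE}$ has explicit linear growth $|\mu_{ALE}|\le C\sqrt{H^2+r^2+1}$. The boundary hypothesis is used to show $\mu_0$ extends smoothly across $\partial\h\setminus\bigcup_{k\in I}(-a_k,0)$ and factors as $\mu_0=r^2 f$ with $f_{HH}+f_{rr}+\tfrac{3}{r}f_r=0$, i.e.\ $f$ is harmonic on $\R^5$. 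Pairing with $\nu_1$ and $\nu_d$ and using the one-sided bounds plus the mean-value inequality on large spheres in $\R^5$ forces $f$ to be constant; this yields $\xi-\xi_{ALE}=\nu H$ for some constant vector $\nu$, which is exactly the two-parameter freedom in Theorem~\ref{Theorem unbounded PTK SFK}. If you want to salvage your $\R^3$ approach, you would need an independent argument bounding the growth of $\xi$ at infinity, and the only geometric input available for that is the polytope constraint on $x$ --- which is exactly what the $\mu$-comparison exploits.
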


\begin{proof}

The proof closely follows the arguments of Sena-Dias in \cite{SD21}, with the essential differences in Claim~\ref{claim for surjectivity}, Lemma~\ref{lemma mu_0=r^2f} and Lemma~\ref{lemma f1 constant}. 

\medskip

Starting from a scalar-flat K\"ahler metric on a symplectic $4$-manifold, Donaldson shows in \cite{Don09} Theorem 1 that each solution of the scalar-flat K\"ahler equation locally arises from axi-symmetric harmonic functions $\xi_1, \xi_2$ in the way described in Theorem~\ref{theorem local model}. Here $\xi_1, \xi_2$ are unique up to translation in the $H$ variable and addition of constants. More precisely, given $u(x)$ the symplectic potential of a scalar-flat K\"ahler metric, we consider $(H, r)$ satisfying

\begin{equation}\label{from x1, x2 to H, r}
    r=(\det\Hess u)^{-1/2}, \quad \displaystyle\frac{\partial H}{\partial x_1}=-\displaystyle\frac{u^{2j}}{r}\displaystyle\frac{\partial r}{\partial x_j}, \quad\displaystyle\frac{\partial H}{\partial x_2}=\displaystyle\frac{u^{1j}}{r}\displaystyle\frac{\partial r}{\partial x_j}.
\end{equation}

Then for the moment $P^{\circ}$ endowed with the metric $g_{poly}=u_{ij}\displaystyle\sum_{i, j=1}^2dx_i\otimes dx_j$, $r$ is harmonic and $H$ is its harmonic conjugate. The metric $g_{poly}$ induces a complex structure $J_{poly}$ via its Hodge star. We obtain a $J_{poly}$-holomorphic local coordinate on $P^{\circ}$, written as 

$$z\coloneqq H+ir.$$

The coordinates $(H, r)$, as functions of $x_1, x_2$, are known as the isothermal coordinates. Note the boundary behavior of $u$ is determined on $\partial P$. Then $r$ extends continuously to $\partial P\backslash\ell_I$, as does $H$. Thus, $z$ extends to $\partial P\backslash\ell_I$ as a continuous function, denoted by $\tilde{z}$. Note $r=0$ on $\partial P\backslash \ell_I$, then $\tilde{z}(\partial P\backslash\ell_I)\subset\partial\h\backslash\bigcup_{k\in I}(-a_k, 0)$. From the boundary behavior of the metric we know $\tilde{z}$ is a bijection from $\partial P\backslash \ell_I$ to $\partial\h\backslash\bigcup_{k\in I}(-a_k, 0)$.

\begin{lem}
    The map $z: P^{\circ}\rightarrow\h$ is a bijection.
\end{lem}

\begin{proof}
    The proof relies on the real sub-manifold associated with the symplectic $4$-manifold $X$. From the discussions in \cite{KL15} Theorem 6.7 and \cite{SD21}, we know since the moment map is proper, $X$ is symplectomorphic to the quotient of some complex plane $\C^d$ by a sub-torus of the standard torus, with $d$ being the number of edges of the moment map of $X$. Note complex conjugation descends to a function on $X$ and $D$. We denote its fixed point set(which are real submanifolds) of $X$ and $D$ by $X_{\R}$ and $D_{\R}$, respectively. The moment map 
    
    $$\phi: X\backslash D\rightarrow\mathbb{R}^2\backslash\bigcup_{k\in I}(-a_k, 0),$$
    
    when restricted to $X_{\mathbb{R}}\backslash D_{\R}$ is denoted by $\phi_{\mathbb{R}}$. It is a 4 to 1 branched cover with the branched set being $\phi^{-1}(\partial P\backslash\ell_I)$ and write $\phi^{-1}_{\mathbb{R}}(P^{\circ})=\bigcup_{j=0}^{3}P_j$ as a disjoint union of the open sets $P_j$. Let $g_{\mathbb{R}}$ be the induced metric on $X_{\mathbb{R}}\backslash D_{\R}$ and $P_0$, then $g_{poly}\coloneqq \phi_{\R_*}(g_{\R})=u_{ij}dx_i\otimes dx_j$. Let 
    
    $$(X_{\mathbb{R}}\backslash D_{\R})^{\circ}\rightarrow X_{\mathbb{R}}\backslash D_{\R}$$
    
    be the orientable double cover of $X_{\mathbb{R}}\backslash D_{\R}$ and $\phi^{\circ}_{\mathbb{R}}$ be the lifting of $\phi_{\mathbb{R}}$ to $(X_{\mathbb{R}}\backslash D_{\R})^{\circ}$. For each $P_j$, $k=0, 1, 2$, we write $P_j^0$ and $P_j^1$ as the pre-image under the double cover. Via the Hodge star operator, we obtain from the metric induced by $g_{\mathbb{R}}$ on $(X_{\mathbb{R}}\backslash D_{\R})^{\circ}$ a complex structure $J_{\mathbb{R}}$, whose pushforward under $\phi^{\circ}_{\mathbb{R}}$ defines a complex structure $J_{poly}$ on $P\backslash\ell_I$. 

    \begin{claims}\label{lemma take neighbourhood p}
         Given $w\in\partial P\backslash\ell_I$, and $p$ an element in $(\phi^{\circ}_{\mathbb{R}})^{-1}(w)\subset (X_{\mathbb{R}}\backslash D_{\R})^{\circ}$, then there is a neighbourhood $V_p$ of $p$ in $(X_{\mathbb{R}}\backslash D_{\R})^{\circ}$ such that $z\circ\phi^{\circ}_{\mathbb{R}}$ extends to $V_p$ as a holomorphic function for $J_{\mathbb{R}}$. 
    \end{claims}

    The proof is a lifting and extension argument for the harmonic function $r$ and the harmonic conjugate $H$ on $X\backslash D$, and this argument is essentially the same as in \cite{SD21} Lemma 6.2. 
    
    \medskip
    
    Now, we show the injectivity. For the holomorphic map $z$, we consider its degree; it suffices to show the degree is $1$. If we consider a point $w_0\in \partial\h\backslash\bigcup_{k\in I}(-a_k, 0)$ and let $w\in \partial P\backslash\ell_I$ be its pre-image of $\tilde{z}$. Fix $p$ in the closure of $P_0^0\cap P_1^0$. From Claim~\ref{lemma take neighbourhood p}, there exists an extension of $z$ to $V_p$. Assume $V_p$ is small enough to admit a complex chart $z: V_p\rightarrow\C$. Then following the arguments of the proof of injectivity in \cite{SD21} Section 6.1, we know for any $\ep>0$ such that $B_{\ep}\cap P\subset\phi_{\R}^{\circ}(V_p)$, there exists $\dt$ such that $z^{-1}(B_{\dt}(w_0))\subset B_{\ep}(w)$; furthermore, given a point $w_0'\in B_{\dt}(w_0)\cap\h$, we can enlarge the loop $\gamma$ enclosing all pre-images of $z$ so that it also encloses $w$, then as $w_0'$ tends to $w_0$. The number of pre-images of $w_0'$ given by the integral

    $$\displaystyle\int_{\gamma}\displaystyle\frac{\frac{dz}{ds}(s)ds}{z(s)-w_0'}$$

    equals to that of $w_0$, which is $1$. 

    \medskip

    Then we show the surjectivity. The different boundary behavior for the symplectic potential in our case compared to that in \cite{SD21} doesn't cause any essential difference to the arguments of the proof. Note $P$ is non-compact and admits non-trivial harmonic functions, from the uniformization theorem, we know there exists a holomorphic map $\kappa: P^{\circ}\rightarrow\h$. 

    \begin{claims}
        $\kappa$ extends as a homeomorphism to $P\backslash\ell_I\rightarrow\overline{\h}\backslash\bigcup_{k\in I}(-a_k, 0)$, and it's a bijection.
    \end{claims}

    \begin{proof}

       First, we show the map is extendable. The different boundary behavior for the symplectic potential in our case compared to that in \cite{SD21} doesn't cause any essential difference to the arguments of the proof, for details we refer the readers to Lemma 5.4. 

       \medskip

       To see the extension is bijective, we argue by contradiction. Assume it's not injective, let $w, v\in \partial P\backslash\ell_I$ such that $\tilde{\kappa}(w)=\tilde{\kappa}(v)$. Take $o\in P^{\circ}$ and consider the Jordan curve going through $\kappa(o)$ and $\tilde{\kappa}(w)=\tilde{\kappa}(v)$. Let $\mathcal{C}$ be the interior of this Jordan curve, then the segment $\mathcal{S}$ joining $w$ and $v$ satisfies $\tilde{\kappa}(\mathcal{S})\subset\partial A\cap\partial\h$, and thus $\tilde{\kappa}$ is constant on $\mathcal{S}$. In particular, we see $\mathcal{S}$ doesn't contain $\ell_k$ for any $k\in I$. Then, the same arguments as in \cite{SD21} give a contradiction. Similarly, we can prove that the inverse is also injective.
    
    \end{proof}
    
    We use this extension map as an auxiliary to show the surjectivity of $z$. Let $U\coloneqq z(P^{\circ})$, write

    $$\partial U=(\partial U\cap\partial\mathbb{H})\cup(\partial U\cap\mathbb{H}),$$

    then the surjectivity of $z$ is equivalent to $\partial U\cap\mathbb{H}=\emptyset$. Consider $z_{\kappa}\coloneqq z\circ\kappa^{-1}:\mathbb{H}\rightarrow\mathbb{H}$, it is a holomorphic, injective map which can be extended bijectively to $\partial \mathbb{H}\backslash\bigcup_{k\in I}(-a_k, 0)\rightarrow\partial \mathbb{H}\backslash\bigcup_{k\in I}(-a_k, 0)$. Consider
    
    $$f(w)\coloneqq \displaystyle\frac{1}{z_{\kappa}(-\frac{1}{w})}: \mathbb{H}\rightarrow \mathbb{H},$$ 
    
    as in \cite{SD21}, the same arguments show that it is holomorphic and can be extended to a holomorphic function on $\mathbb{C}^*$ with $0$ being an isolated singularity using the Schwarz reflection principle. Rewrite $U=z_{\kappa}(\mathbb{H})$, we claim that 

    \begin{claims}\label{claim for surjectivity}
        $\partial U\cap\mathbb{H}=\{\lim z_{\kappa}(w_{n_k}), (w_k) \text{ unbounded with }(w_{n_k}) \text{ being a subsequence}\}.$
    \end{claims}

    \begin{proof}
        For any point $z_{\infty}$ in the above set, we take a sequence $w_k\in \mathbb{H}$ such that $z_{\kappa}(w_k)\rightarrow z_{\infty}$. If the sequence is bounded, there is a convergent subsequence $w_{n_k}$ in $\overline{\mathbb{H}}$ converging to $w\in\overline{\mathbb{H}}$. We have the following possibilities:
        
        \begin{enumerate}[(i)]
            \item If $w\in \mathbb{H}$, then $\tilde{z}_{\kappa}(w)=z_{\kappa}(w)\in U$ but since $U$ is open, $z_{\infty}\notin\partial U$;
            \item if $w\in\partial\mathbb{H}\backslash\bigcup_{k\in I}(-a_k, 0)$, then $\tilde{z}_{\kappa}\in\partial\h\backslash\bigcup_{k\in I}(-a_k, 0)$, then $z_{\infty}\notin \mathbb{H}$; 
            \item if $w=(-a_k, 0)$ for some $k\in I$, consider $\kappa^{-1}(w_{n_k})\coloneqq p_{n_k}$, we have $z(p_{n_k})\rightarrow z_{\infty}\in\mathbb{H}$. From $z: P\backslash\ell_I\rightarrow\overline{\h}\backslash\bigcup_{k\in I}(-a_k, 0)$ is bijective we know $p_{n_k}\rightarrow z^{-1}(z_{\infty})\in P\backslash\ell_I$ and from $\kappa: P\backslash\ell_I\rightarrow\overline{\h}\backslash\bigcup_{k\in I}(-a_k, 0)$ is bijective we know $\kappa(p_{n_k})\rightarrow z_{\kappa}^{-1}(z_{\infty})\in\overline{\h}\backslash\bigcup_{k\in I}(-a_k, 0)$. This implies $w_{n_k}$ tends to an element in $\overline{\h}\backslash\bigcup_{k\in I}(-a_k, 0)$, a contradiction.
        \end{enumerate} 
    \end{proof}

     Equivalently we know 

    $$\partial U\cap\mathbb{H}=\{\lim f(w_{n_k}), w_k\in\mathbb{H}, w_k\rightarrow 0, (w_{n_k}) \text{ a subsequence}\}\coloneqq f(0).$$

   Now, this set contains either $\infty$ or a single point, which is a pole. If the latter holds, we know $z_{\kappa}^{-1}$ is a holomorphic function with an isolated pole, but the image can not lie in $\mathbb{H}$. Hence we conclude that $\partial U\cap\mathbb{H}=\emptyset$. This concludes the proof.
\end{proof}

Now we know $z$ is a bijection, define $\mu\coloneqq z^{-1}$, and let $\mu_{ALE}\coloneqq z_{ALE}^{-1}$ be the corresponding map for the ALE metric $g_{ALE}$(i.e., the case where $\nu=0$) constructed in Theorem~\ref{Theorem unbounded PTK SFK}. 

\begin{lem}\label{lemma mu_0=r^2f}

Consider $\mu_0\coloneqq\mu-\mu_{ALE}$, we have $\mu_0=r^2 f$ for some $f\in C^{\infty}(\overline{\mathbb{H}}\backslash\bigcup_{k\in I}(-a_k, 0))$, and $f$ satisfies 

$$f_{HH}+f_{rr}+\frac{3f_r}{r}=0.$$
\end{lem}

\begin{proof}
    Consider 
    
    $$\eta\coloneqq (u_{x_1}, u_{x_2}), \quad\eta_{ALE}\coloneqq (u_{ALE, x_1}, u_{ALE, x_2}),$$
    
    we write
    
    $$\xi(H,r)=\eta\circ\mu(H,r), \quad \xi_0(H,r)=\eta\circ\mu_0(H,r).$$
    
    To show $\mu_0$ extends as an analytic function to $\overline{\mathbb{H}}\backslash\bigcup_{k\in I}(-a_k, 0)$, it's equivalent to show $\xi_0$ extends as an analytic function on $\overline{\mathbb{H}}\backslash\bigcup_{k\in I}(-a_k, 0)$. Since $\xi_0$ is an axi-symmetric harmonic function on $\h$, from the mean value theorem, it is sufficient to show $\xi_0$ is bounded in a neighborhood of each point on $\partial\mathbb{H}\backslash\bigcup_{k\in I}(-a_k, 0)$. Write

    $$\xi_0=\eta_{ALE}\circ\mu_{ALE}-\eta\circ\mu=(\eta_{ALE}\circ\mu_{ALE}-\eta_{ALE}\circ\mu)+(\eta_{ALE}\circ\mu-\eta\circ\mu).$$

    Note $\eta_{ALE}-\eta\in C^{\infty}$, and $\mu$ extends as a continuous function on $\overline{\mathbb{H}}\backslash\bigcup_{k\in I}(-a_k, 0)$, we know the second term is bounded. For the first term, rewrite it as $\xi_{ALE}-\xi_{ALE}\circ(\mu^{-1}_{ALE}\circ\mu)$. Near $\partial\mathbb{H}\backslash\bigcup_{k\in I}(-a_k, 0)$, there is a singularity of $\xi_{ALE}$ with
    
    $$\xi_{ALE}=\nu_i\log r+O(1) \text{ on each internal }-a_{i+1}<H<-a_i\text{ and }r=0\text{ given }i\notin I.$$
    
    %$$\xi_{ALE}=\nu_i\log r+\displaystyle\frac{1}{2}(\nu_{i+1}-\nu_i)\log(H_{i-1}+\rho_{i-1})-\displaystyle\frac{\Lambda_i}{2}\displaystyle\frac{\nu_i}{\rho_{i-1}}+O(1)\text{ as }H=-a_i\text{ and }r\rightarrow0\text{ given }i\in I.$$ 
    
    It's equivalent to show as $r\rightarrow 0$, $\log \displaystyle\frac{r}{r(\mu^{-1}_{ALE}\circ\mu)}$ is bounded. Composing with $\mu^{-1}$ it suffices to show 
         
    \begin{enumerate}[(i)]
    \item For the conjugate harmonic coordinate $H'$ for $z$, for $i+1\notin I$, $-a_{i+1}<H<-a_i\iff -a_{i+1}<H<-a_i$, and for $i+1\in I$, $H=-a_{i+1}\iff H'=-a_{i+1}$;
    \item $\log \displaystyle\frac{r\circ\mu^{-1}}{r\circ\mu^{-1}_{ALE}}$ is bounded as $r$ approaches $0$.
    \end{enumerate}

    The first claim follows from \cite{SD21} Theorem 6.2 and our choice of coefficients (\ref{aj aj' relation}). For the second claim, recall $r\circ\mu^{-1}=\text{det(Hess }u\circ\mu^{-1})^{-1/2}$, then it's equivalent to show that 
    
    $$\displaystyle\frac{(\text{det Hess }u\circ\mu^{-1})^{-1/2}}{(\text{det Hess }u_{ALE}\circ\mu^{-1})^{-1/2}}$$
    
    is bounded. This follows from the assumed boundary behavior of $u$ and $u_{ALE}$. Direct calculation gives $f_{HH}+f_{rr}+\frac{3f_r}{r}=0$, for details see \cite{SD21} Lemma 6.3.
\end{proof}

\begin{lem}\label{lemma f1 constant}
    For the normal vector $\nu_1=(1,0)$, consider $f_1\coloneqq f\cdot\nu_1$, then $f_1$ is a constant.
\end{lem}

\begin{proof}
    Since $f_1$ is harmonic, it suffices to show it is bounded. From $f=\displaystyle\frac{\mu_0}{r^2}$ and $\nu\cdot\nu_1\ge 0$ we deduce that
    $f_1\le\displaystyle\frac{\mu_{ALE}\cdot\nu_1}{r^2}$. From the expression of $x_1, x_2$ in Theorem~\ref{Theorem unbounded PTK SFK}, we know 
    
    $$|\mu_{ALE}(H,r)|\le C\displaystyle\sqrt{H^2+r^2+1}.$$
    
    As in \cite{SD21}, we view $f$ as a harmonic function in $\R^5$, which only depends on the coordinate $H$ and the distance to the $H$-axis, $r$. Then $\forall w\in \R^5$,
    
    $$f_1(w)\le\displaystyle\frac{C(|w|+1)}{r^2}.$$
    
    Let $\mathcal{A}$ denote the subset of $\R^5$ whose $H$ coordinate does not take values $-a_k$ for any $k\in I$ when the distance $r$ vanishes. For a fixed $z\in \mathcal{A}$, there exists $R_z>0$ large enough such that $\partial B(z, R_z)\subset\mathcal{A}$, and with the mean value theorem, we get $f_1(z)\lesssim\displaystyle\frac{1}{R^3}\displaystyle\int_{\partial B(z, R)}\displaystyle\frac{dw}{r^2}$. Then from \cite{SD21} Lemma 6.4, for $R$ large enough, direct calculations show $\displaystyle\int_{\partial B(z, R)}\displaystyle\frac{dw}{r^2}\lesssim R^2$, thus $f_1$ is bounded from above.
\end{proof}

Similarly, for the normal vector $\nu_2$, we can define $f_2\coloneqq f\cdot\nu_1$, then $f_2$ is a constant. Thus $f$ is a constant, which implies $\xi_0=H\cdot\nu$ for some constant vector. Hence when $\nu=0$, $g$ is the ALE metric constructed in Theorem~\ref{Theorem unbounded PTK SFK} and otherwise $g$ is the generalized or exceptional Taub-NUT ones constructed there.

\end{proof}

\endgroup
\bibliography{reference}
\Addresses
\end{document}